\documentclass[a4paper,12pt]{article}
\usepackage{hyperref}
\hypersetup{
    colorlinks=false,
    linktoc=all,
    linkcolor=red,     %choose some color if you want links to stand out
    citecolor=blue,     %choose some color if you want links to stand out
}

\usepackage{amsmath,amssymb,amsfonts,amsthm,graphics,enumitem}
\usepackage{latexsym, bm}
\usepackage{multicol}
\usepackage{indentfirst}

\usepackage{cite}

\usepackage{graphicx}
\usepackage{caption}
\usepackage{textcomp}
\usepackage{tikz}
\usetikzlibrary{calc,arrows,positioning,shapes}
\usepackage{bigdelim}
\usepackage{nicematrix}
\usepackage{mathtools}
\usepackage{comment}
\usepackage{makecell}
\usetikzlibrary{decorations.pathreplacing}
\usetikzlibrary{intersections}
\usepackage{enumerate}
\usepackage{graphicx,booktabs,multirow}
\usepackage{appendix}

\usepackage{setspace}
\newtheorem{theorem}{Theorem}[section]
\newtheorem{proposition}[theorem]{\rm\bfseries Proposition}

\newtheorem{lemma}[theorem]{Lemma}
\newtheorem{corollary}[theorem]{\rm\bfseries Corollary}
\newtheorem{remark}[theorem]{Remark}
\newtheorem{conjecture}[theorem]{Conjecture}
\newtheorem{problem}[theorem]{Problem}
\newtheorem{Proof of Theorem 1.7.}[theorem]{Proof of Theorem 1.7.}

\newcommand{\ignore}[1]{}
\begin{document}
\noindent
\begin{spacing}{1.1}

\title {Two problems on booksize and triangular edges in Nosal graphs}
\date{}

\author{
Xinghui Zhao\footnote{	School of Mathematical Sciences, South China Normal University,  Guangzhou, 510631, P. R. China. E-mail: {\tt xhzhao@m.scnu.edu.cn}. } \;\;   \;\; Lihua You\footnote{Corresponding author. School of Mathematical Sciences, South China Normal University, Guangzhou, 510631, P. R. China. 
E-mail: {\tt ylhua@scnu.edu.cn}.} \;\;   \;\; Jing Zeng\footnote{College of Cryptology and Cyber Science, Nankai University, Tianjin 300350, P. R. China. E-mail: \texttt{jingzeng@mail.nankai.edu.cn}.} \;\;   \;\;  Xiaoxue Zhang\footnote{School of Mathematical Sciences, South China Normal University, Guangzhou, 510631, P. R. China.
E-mail: {\tt zhang\_xx1209@163.com}.}
} 
\maketitle
\begin{abstract}
A graph $G$ with $m$ edges is said to be a Nosal graph if $\rho(G)>\sqrt{m}$. For a graph $G$, we write $bk(G)$ for its maximum book size and $\tau(G)$ for the number of edges contained in triangles. Li, Liu and Zhang [J. Combin. Theory Ser. B 179 (2026) 219--249] proved that every $m$-edge Nosal graph satisfies $bk(G)> \frac{1}{24}\sqrt{m}$ and $\tau(G) > \frac{1}{12}\sqrt{m}$. Recently, two results on the booksize constant are proved: $\frac{1}{9}$ by  Zhai, Li and Lou [arXiv:2601.10163v2], and $\frac{1}{4}$ by Chen, Li and Tang [arXiv:2607.16746v1].

In this paper, we establish the following result: Every $m$-edge graph $G$ with no isolated vertices and $\rho(G)\geq \sqrt{m}$ that is not isomorphic to any complete bipartite graph satisfies $bk(G)\geq\frac{\rho(G)}{3}$ and $\tau(G)\geq \rho(G)$. As direct consequences, we answer a question of Li, Liu and Zhang [J. Combin. Theory Ser. B 179 (2026) 219--249] and confirm a conjecture of Li, Feng and Peng [J. Graph Theory 110 (4) (2025) 408--425].

\end{abstract}
\noindent
{\bf Keywords:} \ Booksize; Triangular edges; Nosal graphs; Spectral radius

\noindent
{\bf MSC:} \ 05C35, 05C50

\section{Introduction}
A central theme in extremal graph theory is to determine how many copies of a prescribed substructure must occur under given global constraints. In spectral extremal graph theory, one is often interested in how lower bounds on the spectral radius, together with a prescribed number of edges, guarantee the occurrence of many copies of certain subgraphs. A classical result of Nosal \cite{Nosal}, also known as Nosal's theorem, states that every triangle-free graph $G$ with $m$ edges satisfies $\rho(G) \leq \sqrt{m}$, where $\rho(G)$ denotes the spectral radius of $G$. In 2007, Bollob\'as and Nikiforov \cite{bn2007} proved that $t(G)\geq \frac{n^2}{12}\left(\rho(G)-\frac{n}{2}\right)$ and $t(G)\geq \frac{\rho(G)}{3}\left(\rho^2(G)-m\right)$, where $t(G)$ is the number of triangles of $G$ and $n=|V(G)|$. The latter inequality was independently obtained by Cioab\u{a}, Feng, Tait and Zhang \cite{friendship2020}. 
Ning and Zhai \cite{NingZhaitri} later characterized the equality case, showing that equality holds if and only if $G$ is a complete bipartite graph possibly together with isolated vertices. 
They also proved that if $\rho(G)>\sqrt{m}$, then $t(G)\geq \left\lfloor \frac{\sqrt{m}-1}{2}\right\rfloor$, and this bound is best possible. These inequalities can be viewed as spectral supersaturation results: once the spectral radius exceeds the corresponding extremal threshold, the graph must contain many triangles. As noted in \cite{ly}, supersaturation refers to the phenomenon that exceeding an extremal threshold forces the appearance of many forbidden structures, rather than just a single one. Related results on supersaturation were obtained in \cite{LS75,LS83,Colorcri,PYSuperSa,HeMaYangC4,LPStri}.
% From this viewpoint, Nosal's theorem gives the spectral threshold for the appearance of a triangle in terms of the number of edges.

Following \cite{ly}, we say an $m$-edge graph $G$ is \textit{Nosal} if $\rho(G) > \sqrt{m}$. Several extensions of Nosal's theorem to other forbidden subgraphs $F$ have been studied, including cliques \cite{niki2002,bn2007,linningwuCPC,zhang04regular}, complete bipartite graphs \cite{BG2009,nikizarank}, cycles \cite{evencycles,liningJGT2023,lizhaishu2024,zhangcycles2024}, trees \cite{CDTsiam2023,tree2024}, friendship graphs \cite{friendship2020,Zfriendship2022}; see also \cite{py} for related spectral results on triangular edges. For a survey on spectral conditions for extremal graph problems, we refer the reader to \cite{LLL2022}. Nosal's theorem guarantees that every $m$-edge graph $G$ with $\rho(G)>\sqrt{m}$ contains a triangle. It is then natural to ask whether this spectral condition forces many triangles to share a common edge, that is, whether it forces a book of large size.
A book of size $k$ is a collection of $k$ triangles sharing a common edge, and we denote by $bk(G)$ the maximum size of a book contained in $G$. Zhai, Lin and Shu \cite{ZhaiLinShu2021} conjectured that every $m$-edge Nosal graph $G$ should have large booksize.
Nikiforov \cite{nf} first proved that every $m$-edge Nosal graph $G$ satisfies $bk(G)> \frac{1}{12}\sqrt[4]{m}$, and suggested that the exponent $\frac{1}{4}$ might be improved. Motivated by this, Li and Peng \cite{ly2} proposed the following conjecture.

\begin{conjecture}{\rm(\!\!\cite{nf,ly2})}\label{c0}
   For  every $m$-edge Nosal graph $G$, we have $bk(G)=\Omega(\sqrt{m})$.
\end{conjecture}

The conjecture was recently confirmed by Li, Liu and Zhang \cite{ly}. 
More precisely, they proved the following lower bound and showed that the order $\sqrt{m}$ is best possible.

\begin{theorem}{\rm(\!\!\cite{ly})}\label{m0}
     If $G$ is an $m$-edge Nosal graph, then $bk(G)> \frac{1}{24}\sqrt{m}$. Furthermore, there
exist $m$-edge Nosal graphs with no book of size larger than $(\frac{1}{3}+o(1))\sqrt{m}$.
\end{theorem}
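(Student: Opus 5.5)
The lower bound is spectral and the construction is an explicit family, so the proof has two independent parts. For the lower bound I would use the Perron vector. Let $x$ be the nonnegative unit eigenvector of $\rho=\rho(G)$. Let $u$ be a vertex with $x_u=\max_v x_v$, rescaled so that $x_u=1$. Write $b=bk(G)$. The key combinatorial input is that for every $w\in N(u)$ the edge $uw$ lies in at most $b$ triangles, so $|N(w)\cap N(u)|\le b$. Expanding $\rho^2 x_u=(A^2x)_u$ gives
\[
\rho^2 = d_u+\sum_{w\in N(u)}\ \sum_{v\in N(w)\setminus\{u\}} x_v .
\]
I would split the inner sum according to whether $v\in N(u)$ or $v\notin N[u]$.
\begin{itemize}
\item The part with $v\in N(u)$ is bounded by $\sum_{w\in N(u)}\sum_{v\in N(w)\cap N(u)}x_v\le b\,d_u$, using $x_v\le 1$.
\item The part with $v\notin N[u]$ is bounded by $e(N(u),V\setminus N[u])$.
\end{itemize}
Since $d_u+e(N(u))+e(N(u),V\setminus N[u])\le m$, the Nosal hypothesis $\rho^2>m$ then forces an inequality of the form
\[
m<\rho^2\le m-e(N(u))-e_{\mathrm{out}}+b\,d_u ,
\]
where $e_{\mathrm{out}}$ is the number of edges with both endpoints outside $N[u]$. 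This already shows that $b\,d_u$ must absorb a positive quantity.

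This crude bound does not yet give $b=\Omega(\sqrt m)$, because $d_u$ may be as large as $\Theta(m)$. The second step is therefore a case split on $d_u$.
\begin{itemize}
\item If $d_u\le C\sqrt m$, I would run the expansion with weights rather than the trivial bound $x_v\le 1$. The identity $\rho x_w=\sum_{v\sim w}x_v$ together with $\sum_v x_v^2=1$ should show that the vertices of $N(u)$ carry Perron weight of order $1$. Then a Cauchy--Schwarz averaging over $w\in N(u)$ should give $\sum_{w\in N(u)}|N(w)\cap N(u)|x_w\gtrsim \rho$, and this pigeonholes to a single $w$ with $|N(w)\cap N(u)|\ge c\sqrt m$.
\item If $d_u>C\sqrt m$, I would use $\rho x_w\ge x_u=1$ for $w\in N(u)$, so $x_w\ge 1/\rho$. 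I would then repeat the computation from a vertex $w\in N(u)$ of large weight. Alternatively, in this case I would compare with the star $K_{1,d_u}$ and use that the remaining edges must lift $\rho^2$ above $m$. This lifting forces many edges inside $N(u)$, and since each vertex of $N(u)$ has at most $b$ neighbours there, we get $e(N(u))\le b\,d_u/2$.
\end{itemize}
Tracking constants through both cases, I would aim for $\frac1{24}$.

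For the construction, I would start from a complete bipartite graph $K_{s,t}$ with $s\approx 9t$. There $\rho=\sqrt{st}=3t$ equals $\sqrt m$ exactly, and every triangle created by a perturbation inside the large part $A$ has its books bounded by $t\approx\sqrt m/3$. Adding a perfect matching inside $A$ is not enough: the quotient matrix gives $\rho^2\approx 9t^2+3t$ while $m\approx 9t^2+4.5t$. So the perturbation has to be chosen more carefully. I would try adding a small dense graph, for instance a clique of size $o(t)$, on a subset of $A$, possibly also deleting a few $A$–$B$ edges. The aim is that $\rho^2$ grows by more than the number of added edges while all books stay at most $(1+o(1))t$. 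The rank-one perturbation estimate $\rho^2\approx st+\frac{2e'}{s}\cdot(\text{weight factor})$ should guide how to tune the sizes.

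I expect the main obstacle to be the weighted averaging step in the lower bound. The naive bound $x_v\le 1$ loses a factor of $d_u/\rho$, so one must show that the Perron mass on $N(u)$ is concentrated enough for pigeonhole to produce a single edge $uw$ in $\Omega(\sqrt m)$ triangles. Matching the $\frac13$ construction exactly will also require care.
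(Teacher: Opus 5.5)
Your lower-bound sketch stops exactly where the work begins, and the construction you propose does not produce a Nosal graph.

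\textbf{Lower bound.} Expanding $(A^2\mathbf{x})_u$ at the maximum vertex and using $\rho^2>m$ gives exactly $\sum_{vw\in E(N(u))}(x_v+x_w-1)>e_{\mathrm{out}}+\sum_{z\notin N[u]}d_{N(u)}(z)(1-x_z)$, which is the identity \eqref{s2} rearranged. This forces one edge inside $N(u)$, i.e.\ a triangle, but says nothing about $|C_e|$. The step meant to produce a large book is not merely unproved; it is too weak by a factor $\rho$. With $x_u=1$ you have $\sum_{w\in N(u)}x_w=\rho$, so $\sum_{w\in N(u)}|N(w)\cap N(u)|\,x_w\le b\rho$. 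A lower bound of order $\rho$ on this sum therefore pigeonholes only to $b\gtrsim 1$; you would need order $\rho^2\approx m$. In the branch $d_u>C\sqrt{m}$, the inequality $e(N(u))\le b\,d_u/2$ points the wrong way, since the ``lifting'' only yields $e(N(u))>e_{\mathrm{out}}$.

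What is missing is a mechanism that converts $\rho^2\ge m$ into information about common neighbourhoods. The paper does not reprove this cited theorem, but its Theorem~\ref{t1} gives $bk(G)\ge\rho/3>\sqrt{m}/3$ by a global argument. One double counts, over all triangles $\mathcal T$, the Perron weight $Y_1(\mathcal T)$ of vertices adjacent to exactly one vertex of $\mathcal T$. From above, $\sum_{\mathcal T}Y_1(\mathcal T)\le bk(G)\sum_e z_e=bk(G)\bigl((m-\rho^2)\sum_v x_v+\sum_e t_e\bigr)$ by Lemma~\ref{l91}. From below, summing the eigen-equations over the three vertices of each triangle and using $|C_e|\le bk(G)$ gives at least $(\rho-2bk(G))\sum_e t_e$ (Lemma~\ref{l92}). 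Hence $(\rho-3bk(G))\sum_e t_e\le 0$ with $\sum_e t_e>0$.

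A local argument at $u$ can also be made to work, but it needs two ingredients your sketch lacks. One is a deficit bound like Lemma~\ref{l2}. The other is the fact that a heavy endpoint $v'$ of an edge $vv'$ inside $N(u)$ with $x_v+x_{v'}>1$ has Perron weight at least $\rho x_{v'}-1-bk(G)$ outside $N[u]$, of which at most $bk(G)$ is adjacent to $v$.

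\textbf{Construction.} For $K_{s,t}$ the unit Perron vector is $1/\sqrt{2s}$ on the large side $A$. So, to first order, each edge added inside $A$ raises $\rho^2$ by only $2\sqrt{t/s}\approx\frac{2}{3}<1$ when $s\approx 9t$; your matching computation is an instance of this. Deleting an $A$--$B$ edge lowers $\rho^2$ by about $2$ and $m$ by only $1$. So small perturbations of $K_{9t,t}$ do not become Nosal.

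For your clique the computation is exact. Adding $K_k$ inside $A$ gives $\rho^2=st+\frac{k(k-1)t}{\rho-k+1}$ against $m=st+\binom{k}{2}$, so the graph is Nosal iff $\rho<2t+k-1$. Since $\rho\ge\sqrt{st}\approx 3t$, this forces $k\gtrsim t$ and books of size $t+k-2\gtrsim 2t$, about twice the target. To first order, a perturbative example needs $s<4t$, and then an added edge already lies in $t>\sqrt{m}/2$ triangles.

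Reaching $\frac{1}{3}$ requires a macroscopic, triangle-free addition with degrees at most $t$. For instance, insert a $K_{t,t}$ into the large side of $K_{t,8t-1}$; equivalently, take $K_{t,t,t}$ and join one part to $6t-1$ new vertices. Then $m=9t^2-t$ and every edge has at most $t$ common neighbours. The quotient polynomial $p(\lambda)=\lambda^3-t\lambda^2-(8t^2-t)\lambda+6t^3-t^2$ satisfies $p(\sqrt{m})=t^2(\sqrt{m}-3t)<0$, so $\rho>\sqrt{m}$ while $bk=t=(\frac{1}{3}+o(1))\sqrt{m}$. The paper takes this half from Example~2.3 of \cite{ly}.
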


Recall that an edge is called triangular if it is contained in a triangle. 
While the booksize measures how many triangles can be forced to share a common edge, the number of triangular edges measures how many edges are involved in triangles. Li, Feng and Peng \cite{py} studied spectral supersaturation problems involving triangular edges and established a spectral version of the Erd\H{o}s--Faudree--Rousseau theorem. They also posed the following conjecture.
\begin{conjecture}{\rm(\!\!\cite{py})}\label{c1}
    If $G$ is a graph with $m$ edges and $\rho(G)\geq \sqrt{m}$, then $G$ has at least $ \sqrt{m}$ triangular edges, unless $G$ is a complete bipartite graph with possibly some isolated vertices.
\end{conjecture}

The corresponding version of this conjecture for Nosal graphs was recorded in \cite{ly} as follows.
\begin{conjecture}{\rm(\!\!\cite{py,ly})}\label{c2}
    Every $m$-edge Nosal graph  has more than $ \sqrt{m}$ triangular edges.
\end{conjecture}

In fact, since a book of size $k$ contains $2k+1$ triangular edges, Theorem \ref{m0} implies the following lower bound.
\begin{theorem}{\rm(\!\!\cite{ly})}\label{m1}
     Every $m$-edge Nosal graph  has at least $ \frac{1}{12}\sqrt{m}$ triangular edges.
\end{theorem}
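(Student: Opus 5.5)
The statement to prove is Theorem \ref{m1}: every $m$-edge Nosal graph has at least $\frac{1}{12}\sqrt{m}$ triangular edges. The plan is to derive it directly from Theorem \ref{m0} by a counting argument, with no further spectral input.

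First, let $G$ be an $m$-edge Nosal graph, so $\rho(G)>\sqrt{m}$. By Theorem \ref{m0}, $G$ contains a book of size $k=bk(G)>\frac{1}{24}\sqrt{m}$. Concretely, there is an edge $uv$ and distinct vertices $w_1,\dots,w_k$, each adjacent to both $u$ and $v$.

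Second, I list the triangular edges inside this book. The spine $uv$ lies in the triangle $uvw_1$. Each edge $uw_i$ and $vw_i$ lies in the triangle $uvw_i$. These $2k+1$ edges are pairwise distinct, because the $w_i$ are distinct and none of them equals $u$ or $v$.

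Third, I combine the two bounds. This gives
\[
\tau(G)\ \ge\ 2k+1\ >\ 2\cdot \tfrac{1}{24}\sqrt{m}+1\ >\ \tfrac{1}{12}\sqrt{m}.
\]
In fact the inequality is strict, which is slightly more than the statement claims.

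There is no genuine obstacle here. The only point needing care is that the edges counted in the second step are distinct and each really lies in a triangle, and both follow directly from the definition of a book. All the spectral difficulty is contained in Theorem \ref{m0}, which we are allowed to assume.
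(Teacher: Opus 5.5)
Your argument is correct and is exactly how the paper obtains this bound: it notes that a book of size $k$ contains $2k+1$ triangular edges and then applies Theorem~\ref{m0}. Calling the spine triangular uses $k\ge 1$, which holds because $k>\frac{1}{24}\sqrt{m}>0$ is an integer.
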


Theorem \ref{m0} shows that no lower bound of the form $bk(G)\geq c\sqrt{m}$ can hold for all $m$-edge Nosal graphs with $c>\frac{1}{3}$. It is therefore natural to ask whether $\frac{1}{3}$ is the best possible constant. Li, Liu and Zhang \cite{ly} posed the following problem.
\begin{problem}{\rm(\!\!\cite{ly})}\label{P1}
    Does every $m$-edge Nosal graph contain a book of size $\frac{1}{3}\sqrt{m}$?
\end{problem}

Let $B_{r+1}$ denote the book of size $r+1$. Zhai, Li and Lou \cite{zm} made further progress on this problem by proving a spectral extremal result for $B_{r+1}$-free graphs. As an immediate consequence, their result implies that every $m$-edge Nosal graph satisfies $bk(G)>\frac{1}{9}\sqrt{m}$.
\begin{theorem}{\rm(\!\!\cite{zm})}\label{m3}
    Let $r$ be a positive integer, and let $G$ be a $B_{r+1}$-free graph with $m$ edges, where $m\geq (9r)^2$. Then $\rho (G)\leq \sqrt{m}$, with equality if and only if $G$ is a complete bipartite graph with possibly some isolated vertices.
\end{theorem}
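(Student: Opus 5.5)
Since Theorem \ref{m3} is quoted from \cite{zm}, the plan is to reprove it in the form that fits our framework: assume $G$ is $B_{r+1}$-free with $m\ge (9r)^2$ edges and $\rho=\rho(G)\ge\sqrt m$, and show that $G$ is complete bipartite plus isolated vertices. First reduce to $G$ connected by deleting isolated vertices and passing to the component carrying $\rho$; this component has at most $m$ edges, and if it had strictly fewer we would have $\rho>\sqrt{m'}$ with $m'\ge (9r)^2$ still, so we may assume $\rho\ge\sqrt m$ in a connected graph. Let $x$ be the Perron vector normalized so that $\max_v x_v=x_u=1$ for a vertex $u$.

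The main tool is the standard second-power identity. Let $A=N(u)$ and $B=V\setminus(A\cup\{u\})$. Then
\[
\rho^2 x_u=d(u)x_u+\sum_{v\in A} d_A(v)x_v+\sum_{w\in B} d_A(w)x_w .
\]
Since $\rho^2\ge m=d(u)+e(A)+e(A,B)+e(B)$, we get
\[
\sum_{v\in A} d_A(v)x_v+\sum_{w\in B}d_A(w)x_w\ \ge\ e(A)+e(A,B)+e(B).
\]
Because $x_w\le1$, this becomes $2e(A)\cdot(\text{weights})-e(A)\ge e(B)+\sum_{w\in B}d_A(w)(1-x_w)$.

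$B_{r+1}$-freeness enters as follows: each edge $uv$ with $v\in A$ lies in $d_A(v)$ triangles, so $d_A(v)\le r$, i.e.\ $\Delta(G[A])\le r$. Consequently $e(A)\le r|A|/2$. Also every edge inside $A$ lies in at most $r$ triangles. This gives
\[
\sum_{v\in A}d_A(v)x_v\le 2e(A),
\]
and a sharper estimate comes from bounding $x_v$ for $v\in A$ using $\rho x_v\le 1+d_A(v)+\sum_{B}\dots$.

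The key step, and the main obstacle, is to force $e(A)=0$. I would argue that if $e(A)>0$, then the left-hand surplus $e(A)$ must be compensated by a deficit $e(B)+\sum_B d_A(w)(1-x_w)$ being small, while the largeness $\rho\ge 9r$ makes the Perron entries of the triangle-carrying vertices small ($x_v\lesssim (r+\text{stuff})/\rho$). Concretely, I would first try to show that $\sum_{v\in A} d_A(v)x_v\le e(A)\cdot c\,r/\rho$ with $c\le 9$, so that with $\rho\ge 9r$ the inequality above is strict unless $e(A)=0$. Then $G[A]$ is empty, $e(B)=0$, and $x_w=1$ for every $w\in B$ with $d_A(w)>0$. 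Eigenvector equations then propagate to show that all of $B\cup\{u\}$ have common neighborhood $A$ and $A$ is complete to them, giving a complete bipartite graph. Finally, I would check equality $\rho=\sqrt m$ there. The constant $9$ would come from carefully splitting $A$ into vertices of large and small $x_v$, and that bookkeeping is where I expect the real difficulty.
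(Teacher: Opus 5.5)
The proof breaks at the step you yourself flag as the main obstacle, and the concrete route you propose for it cannot work. The inequality $\sum_{v\in A}d_A(v)x_v\le \frac{9r}{\rho}\,e(A)$ is equivalent to saying that the surplus $\sum_{vv'\in E(G[A])}(x_v+x_{v'}-1)$ is nonpositive. It does not follow from $B_{r+1}$-freeness and $\rho\ge 9r$. Take a triangle $uvv'$ and attach $N\ge 63$ pendant vertices to each of $u,v,v'$. Then every edge lies in at most one triangle ($r=1$), $\rho=1+\sqrt{N+1}\ge 9$, and by symmetry $x_u=x_v=x_{v'}=1$. Moreover $E(G[A])=\{vv'\}$, so $\sum_{z\in A}d_A(z)x_z=2>9/\rho$. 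The triangle-carrying vertices of $A$ need not have small entries, because the ``stuff'' in $\rho x_v\le 1+d_A(v)+\sum_{N_B(v)}x$ can be of order $\rho$. Splitting $A$ by the size of $x_v$ does not change this. The example satisfies every hypothesis except $\rho\ge\sqrt m$, and that hypothesis is exactly the inequality ``surplus $\ge$ deficit''. So the surplus can be positive (here it is $1$), and what you must show is that the deficit $e(B)+\sum_{w\in B}d_A(w)(1-x_w)$ is strictly larger (here it is $2N(1-1/\rho)$). Your sketch has no mechanism for this.

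The missing idea is to charge the surplus of an edge $vv'$ of $G[A]$ to the vertices of $B$ that are adjacent to $v'$ but not to $v$.
\begin{enumerate}
\item From $\rho x_w=\rho-S_w+Z_w$ with $S_w=\sum_{z\in A\setminus N(w)}x_z$, you get $d_A(w)(1-x_w)+\frac12 d_B(w)\ge\frac12 x_wS_w$. Hence the deficit is at least $\frac12\sum_{v\in A}x_v\sum_{w\in B\setminus N(v)}x_w$; this is Lemma~\ref{l2}.
\item If $x_{v'}>\frac12$, then $N_B(v')$ carries Perron weight at least $\rho x_{v'}-1-r$. At most $r-1$ of its vertices are adjacent to $v$, since $u$ is already a common neighbour of $v,v'$. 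So $\sum_{w\in B\setminus N(v)}x_w\ge\rho x_{v'}-2r$.
\item Combine this with $x_v+x_{v'}-1\le x_v(2x_{v'}-1)^++x_{v'}(2x_v-1)^+$ (Lemmas~\ref{l4} and~\ref{l5}) and $d_A(v)\le r$. Then the deficit strictly exceeds the surplus whenever $\rho>4r$.
\item In the remaining case, every edge of $G[A]$ with $x_v+x_{v'}\ge1$ has $x_v=x_{v'}=\frac12$. This forces $G=(1+|B|)K_1\vee G[A]$, and then $\rho\le 3r$.
\end{enumerate}
This is the scheme of the proof of Theorem~\ref{t2} (compare \eqref{r11}).

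The paper itself does not reprove the statement. It follows at once from Theorem~\ref{t1}, even under the weaker assumption $m>9r^2$: a $B_{r+1}$-free graph has $bk(G)\le r$, while Theorem~\ref{t1} gives $bk(G)\ge\rho/3\ge\sqrt m/3$. That proof is a global weighted double count over triangles and needs no localization at a maximum vertex.

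A minor point: in your reduction the component need not have at least $(9r)^2$ edges. What survives, and all you actually use, is $\rho\ge\sqrt m\ge 9r$.
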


\begin{corollary}{\rm(\!\!\cite{zm})}\label{co1}
     Every $m$-edge Nosal graph $G$ satisfies $bk(G)>\frac{1}{9}\sqrt{m}$.
\end{corollary}

Recently, Chen, Li and Tang \cite{CLT2023} further improved this result.

\begin{theorem}{\rm(\!\!\cite{CLT2023})}\label{mm1}
    Every $m$-edge Nosal graph $G$ satisfies $bk(G)>\frac{1}{4}\sqrt{m}$.
\end{theorem}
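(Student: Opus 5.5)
We argue by contradiction through the Perron vector and a local double-counting around a vertex of maximum weight, in the spirit of the arguments behind Theorem~\ref{m0} and Theorem~\ref{m3}. Let $G$ be an $m$-edge Nosal graph and suppose $bk(G)\le k$, where $k<\frac14\sqrt m$. We may assume $G$ is connected: a component with largest spectral radius is still Nosal with respect to its own number of edges, which is at most $m$. Let $x$ be the Perron vector, scaled so that $x_u=\max_v x_v=1$. Write $A=N(u)$ and $B=V(G)\setminus(A\cup\{u\})$, and for a vertex $w$ let $d_A(w)=|N(w)\cap A|$.

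\textbf{Step 1 (basic identity).} Expanding $\rho^2x_u=(A(G)^2x)_u$ gives
\[
\rho^2=\sum_{w}d_A(w)x_w=d(u)+\sum_{w\in A}d_A(w)x_w+\sum_{w\in B}d_A(w)x_w .
\]
For $w\in A$, every neighbour of $w$ inside $A$ is a common neighbour of $u$ and $w$. Hence $d_A(w)\le k$, and in particular $e(A)\le kd(u)/2$. For $w\in B$, the sum $\sum_{w\in B}d_A(w)$ equals $e(A,B)$. Subtracting $m=d(u)+e(A)+e(A,B)+e(B)$ gives
\[
0<\rho^2-m\le k\sum_{w\in A}x_w-e(A)-e(B)-\sum_{w\in B}d_A(w)(1-x_w).
\]
Since $\sum_{w\in A}x_w=\rho$, this says that the surplus $\rho^2-m$ is at most $k\rho$ minus several nonnegative ``defect'' terms.

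\textbf{Step 2 (sharpening the $A$-term).} The crude bound $k\rho$ is not enough, so the defect terms must be played against it. The term we actually need to control is $\sum_{w\in A}d_A(w)x_w$, and it is at most $2e(A)$ when every $x_w\le 1$. We therefore aim for the estimate
\[
\rho^2-m\le e(A)-e(B)-\sum_{w\in B}d_A(w)(1-x_w).
\]
This yields $e(A)>e(B)+\sum_{w\in B}d_A(w)(1-x_w)$. Thus the edges spanned by $N(u)$ dominate everything outside the star at $u$.

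\textbf{Step 3 (applying the same identity at the other end of an edge).} Next we apply the same reasoning to a vertex $v\in A$ with large $d_A(v)$, or equivalently we sum the identity over an edge $uv$ with $v\in A$. The goal is an inequality of the form
\[
\rho^2\le d(u)+d(v)+(\text{number of triangles through } uv)\cdot(\text{const})+m',
\]
where $m'$ counts edges not incident to $u$ or $v$. Combined with $e(A)\le kd(u)/2$ and $e(A)\ge\rho^2-m$, such a bound should force $d(u)\ge 2(\rho^2-m)/k$. Feeding this back through $\rho x_u=\sum_{A}x_w\le d(u)$ should produce a quadratic inequality in $\rho$ that fails once $k<\rho/4$. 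This would give the contradiction.

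\textbf{Expected main obstacle.} The hard step is Step~3, namely converting the merely positive surplus $\rho^2-m$ into a quantitative bound. Nosal's hypothesis only gives $\rho^2>m$ with no margin, so all of the strength must come from the weights $1-x_w$ on $B$ and on $A$. What I would try first is a second-moment trick: use $\rho^2\sum_w x_w^2$ together with $\sum_{vw\in E}(x_v+x_w)^2\le\ldots$, in the style of Bollob\'as--Nikiforov, to turn $\sum_{w\in A}(1-x_w)$ into a loss of order $d(u)-\rho$. I would then optimize the resulting inequality in $d(u)$, aiming for the constant $\tfrac14$.
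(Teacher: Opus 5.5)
\textbf{There is a genuine gap.} Your Steps 1 and 2 are correct --- Step 1 is essentially the localized identity \eqref{s2} that the paper uses for Theorem~\ref{t2} --- but the argument ends there. Step~3 is a statement of intent, and the mechanism it describes cannot yield a contradiction. The bound $d(u)>2(\rho^2-m)/k$ already follows from $e(A)\le kd(u)/2$ and Step~2. The bound $\rho=\sum_{w\in A}x_w\le d(u)$ is a second \emph{lower} bound on the same quantity. Two lower bounds on $d(u)$ never conflict, and you have no upper bound on $d(u)$.

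More fundamentally, the size of the surplus $\rho^2-m$ carries no usable information. For $K_{n,n}$ plus one edge inside a part, one computes $\rho^2-n^2=2n/(\rho-1)$, so $\rho^2-m=1+o(1)$ while $\rho\to\infty$. The Nosal hypothesis can therefore only be used through its sign, and the factor $\rho$ to be compared with $bk(G)$ has to come out of an eigenvalue equation. Your second-moment idea does not supply this; note that $\sum_{w\in A}(1-x_w)=d(u)-\rho$ holds identically. Step~2 also discards information you need. Replacing $\sum_{w\in A}d_A(w)x_w-e(A)=\sum_{ab\in E(A)}(x_a+x_b-1)$ by $e(A)$ forgets that edges of $A$ with $x_a+x_b<1$ contribute negatively. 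After that, neither $\rho$ nor the weights appear in your inequality.

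The paper gets the statement from Theorem~\ref{t1}, in the stronger form $bk(G)>\rho/3>\sqrt m/3$, via a global weighted double count over triangles. Summing the eigen-equations over the vertices of each triangle $\mathcal T$, and using $|C_e|\le bk(G)$ to control outside vertices with two or three neighbours in $\mathcal T$, gives $\sum_{\mathcal T}Y_1(\mathcal T)\ge(\rho-2bk(G))\sum_e t_e$ (Lemma~\ref{l92}). A vertex counted in $Y_1(\mathcal T)$ misses both ends of the opposite edge, so Lemma~\ref{l91} gives $\sum_{\mathcal T}Y_1(\mathcal T)\le bk(G)\sum_e z_e=bk(G)\big((m-\rho^2)\sum_v x_v+\sum_e t_e\big)$. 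Hence $(\rho-3bk(G))\sum_e t_e\le bk(G)(m-\rho^2)\sum_v x_v<0$. Since a triangle exists, $\sum_e t_e>0$, and so $bk(G)>\rho/3$.

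If you prefer to stay local, your set-up can be completed, but only with ingredients absent from the proposal:
\begin{enumerate}
\item Keep the weights in Step~2.
\item Bound the deficit $e(B)+\sum_{w\in B}d_A(w)(1-x_w)$ below by $\frac12\sum_{v\in A}x_v\sum_{w\in B\setminus N(v)}x_w$ (Lemma~\ref{l2}).
\item Bound $\sum_{ab\in E(A)}(x_a+x_b-1)\le\sum_{v\in A}d_A(v)x_v\max\{2x_{v'}-1,0\}$, where $v'$ is the heaviest neighbour of $v$ in $A$ (Lemmas~\ref{l4} and~\ref{l5}).
\item Apply the eigen-equation at $v'$ rather than at $u$. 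Together with $d_A(v')\le k$ and $|N(v)\cap N(v')\cap B|\le k-1$, it gives $\sum_{w\in B\setminus N(v)}x_w\ge\rho x_{v'}-2k$. Hence $\sum_{w\in B\setminus N(v)}x_w\ge 2d_A(v)\max\{2x_{v'}-1,0\}$ as soon as $\rho\ge 4k$.
\end{enumerate}
This forces $\rho^2\le m$, so a Nosal graph must have $bk(G)>\rho/4$, which is exactly the constant $\frac14$.
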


% \begin{theorem}\label{t1}
%     Let $G$ be an $m$-edge Nosal graph. Then $bk(G)>\frac{\rho(G)}{4}\geq\frac{\sqrt{m}}{4}$.
% \end{theorem}

%\begin{theorem}\label{t2}
 %   Let $G$ be an $m$-edge connected graph with $\rho(G)\geq \sqrt{m}$.  If $G$ is not isomorphic to a complete bipartite graph, then
 %   $\tau(G)\geq \rho (G)$.
%\end{theorem}

% \begin{theorem}\label{t2}
%     Let $G$ be an $m$-edge graph with no isolated vertices such that $\rho(G)\geq \sqrt{m}$ and $G$ is not isomorphic to any complete bipartite graph. Then $\tau(G)\geq \rho(G)$.
% \end{theorem}
 
 %For $X\subseteq V(G)$, $G[X]$ denote the subgraph induced by $X$. For two subsets $X,Y\subseteq V(G)$, we use $E(X,Y)$ to denote the set of edges with one vertex in $X$ and the other in $Y$, and we write $e(X,Y)=|E(X,Y)|$. To be written...

However, the optimal constant problem posed in Problem \ref{P1} remained open.  Moreover, Conjecture \ref{c1} on triangular edges was still unresolved.  In this paper, we prove Conjecture \ref{c1} and answer Problem  \ref{P1} in the affirmative.

\subsection{Main results}
Let $T(G)$ denote the set of all triangular edges of $G$, and let $\tau(G)=|T(G)|$. Our main results are the following two theorems.

% give an affirmative answer to Problem \ref{P1} and confirm Conjecture \ref{c2}. More precisely, we prove the following stronger statements under the condition $\rho(G)\geq \sqrt{m}$, with complete bipartite graphs as the only exceptions.

\begin{theorem}\label{t1}
   Let $G$ be an $m$-edge graph with no isolated vertices such that $\rho(G)\geq \sqrt{m}$ and $G$ is not isomorphic to any complete bipartite graph. Then $bk(G)\geq\frac{\rho(G)}{3}$.
\end{theorem}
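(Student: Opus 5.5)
We argue by contradiction, using the Perron eigenvector. Write $\rho=\rho(G)$ and let $x$ be the unit Perron vector, normalized so that $x_u=\max_v x_v$. If $G$ is disconnected, then $\rho$ is attained on some component $H$ with $m(H)<m$. Moreover $\rho(H)\ge\sqrt m>\sqrt{m(H)}$, so $H$ is Nosal and not complete bipartite (complete bipartite graphs have $\rho=\sqrt{m}$). Books in $H$ are books in $G$, so we may assume $G$ is connected. Suppose $bk(G)<\rho/3$, so every edge lies in fewer than $\rho/3$ triangles.

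The first key identity comes from $A^2x=\rho^2x$ at a vertex $v$. Splitting walks of length two, we get
$$\rho^2 x_v=d(v)x_v+\sum_{w\in N(v)}\sum_{z\in N(w)\setminus\{v\}} x_z .$$
Summing over $v$ with weight $x_v$ (the standard trick of Nikiforov and of Ning--Zhai), or more simply using $\rho^2=\sum_v d(v)x_v^2+2\sum_{\text{paths }vwz}x_vx_z$, leads to
$$\rho^2-m\le \sum_{v}\sum_{z\ne v} x_vx_z\,\bigl(|N(v)\cap N(z)|\bigr)-\dots$$
This is awkward, so we use instead the localized version at the maximum vertex $u$. Let $A=N(u)$, $B=V\setminus(A\cup\{u\})$. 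Then
$$\rho^2x_u=d(u)x_u+\sum_{w\in A}\sum_{z\in N(w)\setminus\{u\}}x_z\le d(u)x_u+2e(A)x_u+e(A,B)x_u\cdot\theta,$$
where the factor $2e(A)$ counts edges inside $A$ twice. Since $m\ge d(u)+e(A)+e(A,B)+e(B)$, we obtain
$$\rho^2-m\le e(A)-e(B)-\sum_{wz\in E(A,B)}\Bigl(1-\tfrac{x_z}{x_u}\Bigr).$$
Here $e(A)=\frac13\sum_{w\in A}d_A(w)$ counts the triangles through $u$, since each edge $uw$ lies in $d_A(w)$ triangles. Hence $d_A(w)<\rho/3$ for all $w\in A$, and so $e(A)<\frac{\rho}{6}|A|$. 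This alone is too weak, because $|A|$ can be about $\rho^2$. So the main work is to exploit the deficit terms on the right to compensate.

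The plan is a sharper weighting. Use $\rho x_w=\sum_{z\sim w}x_z$ together with the eigen-equation at $u$, $\rho x_u=\sum_{w\in A}x_w$, and $\rho^2x_u=\sum_{w\in A}\rho x_w$. Write $\rho x_w=x_u+\sum_{z\in N_A(w)}x_z+\sum_{z\in N_B(w)}x_z$. The edges inside $A$ then contribute $\sum_{w\in A}\sum_{z\in N_A(w)}x_z=\sum_{z\in A}d_A(z)x_z<\frac{\rho}{3}\sum_{z\in A}x_z=\frac{\rho^2}{3}x_u$. Therefore
$$\rho^2x_u<d(u)x_u+\tfrac{\rho^2}{3}x_u+\sum_{w\in A}\sum_{z\in N_B(w)}x_z .$$
Next bound $\sum_{z\in B}d_A(z)x_z$. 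For each $z\in B$, apply $\rho x_z\ge\sum_{w\in N_A(z)}x_w$, and compare with $m$ through $x_z\le x_u$. The goal is to reach $\frac{2}{3}\rho^2\le d(u)+e(A,B)$ up to the deficits, and then to combine this with the global inequality $\rho^2-m\le e(A)-e(B)-(\text{deficits})$ to contradict $\rho^2\ge m$.

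The step I expect to be the main obstacle is exactly this combination. The constant $\frac13$ is tight for the extremal examples of Theorem \ref{m0}, so no slack may be lost. I would first try to reuse the framework of Theorem \ref{m3} and Theorem \ref{mm1}, with $\rho$ in place of $\sqrt m$, adding the $x_z/x_u$ deficit terms to absorb $e(A)$. In the equality analysis, $bk(G)<\rho/3$ together with $\rho^2\ge m$ forces $e(A)=0$ and $e(B)=0$, and forces $x_z=x_u$ on $B$. This makes $G$ complete bipartite, which is excluded by hypothesis.
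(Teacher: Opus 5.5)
Your proposal has a genuine gap: the step that would produce the contradiction is never supplied.

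After localizing at the maximum vertex $u$, the only consequence of $bk(G)<\rho/3$ you use is $d_A(w)=|N(u)\cap N(w)|<\rho/3$ for $w\in A$, i.e.\ the book bound on edges through $u$. This gives $\tfrac23\rho^2<d(u)+\sum_{z\in B}d_A(z)x_z/x_u\le d(u)+e(A,B)\le m$, which is entirely consistent with $\rho^2\ge m$. Your other inequality, $\rho^2-m\le e(A)-e(B)-\sum_{wz\in E(A,B)}(1-x_z/x_u)$, is correct (note $e(A)=\tfrac12\sum_{w\in A}d_A(w)$, not $\tfrac13$). However, nothing you prove controls its positive term $e(A)$. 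The assertion that $bk(G)<\rho/3$ and $\rho^2\ge m$ force $e(A)=e(B)=0$ is precisely what has to be shown, and you give no mechanism for it. In particular, the condition $|N(a)\cap N(b)|<\rho/3$ for edges $ab$ not through $u$ never enters. Since the constant $\tfrac13$ is sharp, there is no reason to expect these local estimates to close on their own.

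The missing idea, which is how the paper proceeds, is a global Perron-weighted double count over all edges and triangles, with no distinguished vertex. For an edge $e=ab$ put $C_e=N(a)\cap N(b)$ and $t_e=\sum_{w\in C_e}x_w$, and let $z_e$ be the weight of the vertices adjacent to neither $a$ nor $b$.

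First, the eigen-equations at $a$ and $b$ give $z_e=\sum_v x_v-\rho(x_a+x_b)+t_e$. Summing over edges yields $\sum_e z_e=(m-\rho^2)\sum_v x_v+\sum_e t_e$.

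Next, let $Y_1(\mathcal T)$ be the weight of vertices adjacent to exactly one vertex of a triangle $\mathcal T$. Such a vertex is non-adjacent to both ends of the opposite edge, so $\sum_{\mathcal T}Y_1(\mathcal T)\le\sum_e|C_e|z_e\le bk(G)\sum_e z_e$. On the other hand, summing the eigen-equations over the three vertices of $\mathcal T$ and using $|C_e|\le bk(G)$ gives $\sum_{\mathcal T}Y_1(\mathcal T)\ge(\rho-2bk(G))\sum_e t_e$. Together these give $(\rho-3bk(G))\sum_e t_e\le bk(G)(m-\rho^2)\sum_v x_v\le 0$.

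It remains to know $\sum_e t_e>0$, i.e.\ that $G$ has a triangle. Otherwise the identity gives $\sum_e z_e\le 0$, so $V(G)=N(a)\cup N(b)$ for every edge $ab$. A triangle-free connected graph with this property is complete bipartite, which is excluded.

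This argument uses the book bound on every edge at once, which is exactly what your localization discards.
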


 \begin{theorem}\label{t2}
    Let $G$ be an $m$-edge graph with no isolated vertices such that $\rho(G)\geq \sqrt{m}$ and $G$ is not isomorphic to any complete bipartite graph. Then $\tau(G)\geq \rho(G)$.
\end{theorem}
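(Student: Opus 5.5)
Write $\rho=\rho(G)$ and let $x$ be the unit Perron vector of $G$. We may assume $G$ is connected: if not, pass to a component $H$ with $\rho(H)=\rho$. Then $e(H)\le m$, so $\rho(H)\ge \sqrt{e(H)}$. If $H$ were complete bipartite, then $\rho=\sqrt{e(H)}\le\sqrt m$, which forces $e(H)=m$. That contradicts the assumption that $G$ has no isolated vertices and is disconnected. So $H$ satisfies the same hypotheses, and $\tau(G)\ge\tau(H)$.

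The main tool is the standard walk-counting identity. For every vertex $u$,
\[
\rho^2 x_u=\sum_{v\sim u}\sum_{w\sim v}x_w=d(u)x_u+\sum_{w\ne u}|N(u)\cap N(w)|\,x_w .
\]
Summing $\rho^2x_u$ against the right weights gives
\[
\rho^2=\sum_u\Big(\sum_{v\sim u}x_v\Big)^2 .
\]
Alternatively, following Nikiforov and Ning--Zhai, one can use
\[
\rho^2-m\le \sum_{uv\in E}\big(\text{contributions of triangles through } uv\big).
\]
A cleaner route is Ning and Zhai's approach: since $\rho^2\ge m$ and $G$ is not complete bipartite, we have $\rho^3-m\rho>0$. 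Bollob\'as--Nikiforov's inequality $t(G)\ge\frac{\rho}{3}(\rho^2-m)$, together with its equality characterization, shows that $t(G)>0$.

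That alone is too weak to count triangular edges, so I would instead localize. Let $T=T(G)$ and let $G'=G-T$ be the graph of non-triangular edges. The key claim is the edge-weighted inequality
\[
\rho^2=\rho^2\sum_u x_u^2\le \sum_{uv\in E(G')}\!\!(\cdots)+\sum_{uv\in T}(\cdots),
\]
and I plan to prove it in the following form. The quadratic form $\sum_u(\sum_{v\sim u}x_v)^2$ equals
\[
\sum_{u}\sum_{v,w\in N(u)}x_vx_w .
\]
Split the pairs $(v,w)$ in $N(u)$ according to whether $vw\in E$ (then both $uv$ and $uw$ lie in $T$) or not. For the non-adjacent pairs, use the Nosal-type bound: since $N(u)$ is independent in the triangle-free sense, the classical argument $\big(\sum_{v\in N(u)}x_v\big)^2\le d(u)\sum_{v\in N(u)}x_v^2$ summed appropriately gives at most $m$. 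The adjacent pairs contribute at most
\[
\sum_{u}\Big(\sum_{v\in N_T(u)}x_v\Big)^2 ,
\]
where $N_T(u)$ is the set of neighbours of $u$ joined to $u$ by a triangular edge. This is bounded by $\rho(G[T])^2$, up to cross terms.

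The aim is an inequality of the shape $\rho^2\le (m-\tau)+\tau\cdot(\text{something})$, or more directly $\rho^2\le m-\tau+\rho\,\tau/\rho'$, which would yield $\tau\ge\rho$. The exceptional case of equality is then identified as complete bipartite via the equality case of Cauchy--Schwarz.

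I expect the hardest step to be controlling the interaction between triangular and non-triangular edges sharply enough to get the exact constant $\rho$ rather than $c\rho$. My first attempt would be an inductive edge-deletion argument. If $\tau<\rho$, delete the triangular edges one at a time, each deletion reducing $\rho^2$ by at most about $2\rho x_ux_v\cdot\rho$. We would reach a triangle-free graph $G'$ with $\rho(G')\le\sqrt{m-\tau}$, while
\[
\rho^2-\rho(G')^2\le 2\rho\sum_{uv\in T}x_ux_v\cdot(\ldots).
\]
Bounding $\sum_{uv\in T}x_ux_v\le \tau/(\ldots)$ by $x_u\le 1/\sqrt2$-type estimates, or by $\sum_{uv\in T}2x_ux_v\le\rho(G[T])\le\sqrt{2\tau}$, should then close the argument. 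Handling the resulting slack precisely is the crux.
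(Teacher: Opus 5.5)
Your reduction to connected $G$ is fine. After that, however, the proposal is not a proof. The ``key claim'' and the target inequality are written with placeholders $(\cdots)$ and ``something'', and none of the concrete steps you do state can force $\tau\ge\rho$.

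In the split of $\rho^2=\sum_u\bigl(\sum_{v\in N(u)}x_v\bigr)^2$, the claim that the non-adjacent pairs contribute at most $m$ is unjustified. $N(u)$ is not independent, and Cauchy--Schwarz bounds the whole square, not its non-adjacent part. Summing $d(u)\sum_{v\in N(u)}x_v^2$ gives $\sum_v x_v^2\sum_{u\sim v}d(u)$, which is guaranteed to be at most $m$ only when every neighbourhood is independent. Even granting that step, adding your adjacent-pair bound only yields $\rho^2\le m+\rho(G[T])^2$. This is compatible with $\rho^2\ge m$ and says nothing about $\tau$. (Also, $\rho^3-m\rho>0$ is false when $\rho=\sqrt m$; only the equality characterization gives $t(G)>0$.)

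The edge-deletion route fails quantitatively, not just in the details. $G-T$ is triangle-free, so $\rho(G-T)\le\sqrt{m-\tau}$, and the Rayleigh quotient gives $\rho\le\rho(G-T)+2S$ with $S=\sum_{uv\in T}x_ux_v$. Your bound $2S\le\rho(G[T])\le\sqrt{2\tau}$ then gives $\sqrt m\le\sqrt{m-\tau}+\sqrt{2\tau}$, which holds for every $\tau\ge0$. To contradict $\tau<\rho$ you would need roughly $4\rho S<\tau$, and no estimate of $S$ that ignores $\rho^2\ge m$ can deliver this. Take $K_{9b,b}$ plus one edge inside the large part, with $b$ large. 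Then $\tau=2b+1<\rho\approx 3b$, yet $S\approx\frac13$, so $4\rho S\approx 4b>\tau$. The Rayleigh step throws away exactly the information needed.

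What is missing is a way to use the eigen-equations that charges each non-triangular edge at most exactly $1$ and the triangular edges strictly less than $\tau$ in total. The paper does this in two stages.

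\begin{itemize}
\item It first normalizes $x_{u^*}=1$ at a vertex of maximum entry and writes
$\rho^2-m=\sum_{uv\in E(G[U])}(x_u+x_v-1)-\sum_{w\in W}f(w)$.
\item Each edge of $\mathcal B$ certifies $R(d_U(u),d_U(v),c_{uv})$ triangular edges through $u^*$. When this local count is not tight, $\tau<\rho$ makes the deficit $\sum_w f(w)$ strictly exceed the excess, so $\rho^2<m$. The case $U_1=U_2$ is handled directly.
\item In the tight case, the structure of Claim~1 holds: $G[V^*]=H$, and every outside vertex sees an independent set. The paper then switches to $\sum_v x_v=1$, so that $\rho^2=\rho\sum_{uv\in E}(x_u+x_v)$ and $\rho(x_u+x_v)\le1$ whenever $N(u)\cap N(v)=\emptyset$.
\item Comparison vectors $\mathbf{p}_z$ with $M=\rho I-A(H)$ and $M^{-1}\ge 0$ then show $\rho\sum_{uv\in T}(x_u+x_v)<\tau$, which gives $\rho^2<m$.
\end{itemize}

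Your plan contains nothing playing this role.
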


Theorem \ref{t1} answers Problem \ref{P1} in the affirmative. 
In fact, it proves a stronger statement under the condition $\rho(G)\geq \sqrt{m}$, with the only exceptions being complete bipartite graphs, possibly with some isolated vertices. Theorem \ref{t2} confirms Conjecture \ref{c1}, and hence also confirms Conjecture \ref{c2}. 

The lower bounds in Theorems \ref{t1} and \ref{t2} are stated in terms of 
$\rho(G)$, not just $\sqrt{m}$. This stronger spectral form is essential to the proof, as retaining $\rho(G)$ throughout the arguments is what allows us to reach the final conclusion.

%The lower bounds in Theorems \ref{t1} and \ref{t2} are given in terms of $\rho(G)$, not merely $\sqrt{m}$. This stronger spectral formulation is crucial to  the proofs, for it is the retention of $\rho(G)$ throughout the arguments that enables us to reach the final conclusions. %Hence, working directly with $\rho(G)$ is key to obtaining the optimal lower bounds for both problems.
 
 \subsection{Notation}

 As usual, we use $V(G)$ and $E(G)$ to denote the vertex set and the edge set of a graph $G$, respectively, and we denote $|E(G)|$ by $e(G)$. Let $v \in V(G)$, and let $U, W \subseteq V(G)$. We denote by $N_G(v)$ ($N(v)$ for short) the set of vertices adjacent to $v$ in $G$, and set $d_G(v) = |N_G(v)|$. When there is no risk of confusion, we write $N_U(v)$ and $d_U(v)$ for $N_G(v) \cap U$ and $|N_G(v) \cap U|$, respectively. Furthermore, for a set $E_1 \subseteq E(G)$, let $d_{E_1}(v)=\{u\in N_G(v)\mid uv\in E_1\}$. Finally, let $E_G(U,W)$ denote the set of edges with one endpoint in $U$ and the other in $W$, and write $e_G(U,W) = |E_G(U,W)|$.% In particular, if $U=W$, we simply write $e_G(U,W)$ as $e_G(U)$.
 
 %we use $\overline{G}$ to denote its complement. 
%For a set $E\subseteq E(\overline{G})$, let $G+E$ be the graph obtained by adding all edges in $E$, and for a set $V\subseteq V(G)$, let $G-V$ be the graph obtained by deleting the vertices in $V$ (and their incident edges). When $E=\{e\}$, we simply write $G+e$. For convenience, we write $\{1,\ldots,n\}$ as $[n]$. 

\subsection{Organization}\label{subsec-org}
In Section \ref{sec-pre}, we introduce some lemmas that will be used in our proofs. In Section \ref{sec-pft1}, we prove Theorem \ref{t1}. In Section \ref{sec-pft2}, we prove Theorem \ref{t2}.  Finally, in Section \ref{sec-con}, we  pose several further problems.

	\section{Preliminaries}\label{sec-pre}
	\begin{lemma}{\rm(\!\!\cite{dc})}\label{l1}
		Let $M$ be an irreducible non-negative symmetric matrix. Then the largest eigenvalue of $M$ corresponds to an eigenvector whose entries are all positive.	
	\end{lemma}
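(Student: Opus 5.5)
This is Perron--Frobenius for symmetric matrices, and I would give the standard variational argument built on the Rayleigh quotient.

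Let $M$ be $n\times n$, nonnegative, symmetric and irreducible, with largest eigenvalue $\lambda$. Since $M$ is real symmetric,
\[
\lambda=\max_{\|x\|=1} x^{T}Mx ,
\]
and every unit vector attaining this maximum is an eigenvector for $\lambda$.

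\textbf{Step 1: produce a nonnegative maximizer.} Take any unit eigenvector $x$ for $\lambda$ and set $y=(|x_1|,\dots,|x_n|)$. Because $M\ge 0$,
\[
y^{T}My=\sum_{i,j} m_{ij}|x_i||x_j|\ \ge\ \Bigl|\sum_{i,j} m_{ij}x_ix_j\Bigr| = |\lambda| \ge \lambda .
\]
So $y$ also attains the maximum and is therefore an eigenvector: $My=\lambda y$ with $y\ge 0$ and $y\ne 0$.

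\textbf{Step 2: show $y>0$.} This is the step that actually uses irreducibility. Since $M\ge 0$ is irreducible, $(I+M)^{n-1}$ is entrywise positive, because its $(i,j)$ entry counts weighted walks of length at most $n-1$ from $i$ to $j$ in the connected associated graph. Then
\[
(1+\lambda)^{n-1}y=(I+M)^{n-1}y>0 ,
\]
since $y$ is nonnegative and nonzero. Here $\lambda\ge 0$, because $\operatorname{tr}M\ge 0$ and $\lambda$ is the largest eigenvalue. Hence $y>0$.

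Alternatively, one can argue directly. Suppose $Z=\{i: y_i=0\}$ is nonempty. For $i\in Z$ we have $0=\lambda y_i=\sum_j m_{ij}y_j$, so $m_{ij}=0$ whenever $y_j>0$. Thus there are no edges between $Z$ and its nonempty complement, which contradicts irreducibility.

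Either route completes the proof, and neither step is a real obstacle. The only point that needs care is Step 1: the Rayleigh-quotient characterization must be applied to $|x|$, which requires $M$ to be symmetric.
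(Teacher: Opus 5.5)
Your proof is correct. The paper itself gives no proof of this lemma. It quotes the standard Perron--Frobenius fact from the cited monograph and applies it only to adjacency matrices of connected graphs, so there is no internal argument to compare yours against.

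What you supply is the usual self-contained variational proof for the symmetric case. Replacing a unit eigenvector $x$ by $|x|$ cannot decrease the quadratic form, since $|x|^{T}M|x|\ge |x^{T}Mx|$. So $|x|$ is again a maximizer of the Rayleigh quotient and hence an eigenvector for $\lambda$. Irreducibility, through either $(I+M)^{n-1}>0$ or the zero-set argument, then rules out zero entries. This is more elementary than the general Perron--Frobenius theorem behind the citation. That theorem must treat non-symmetric matrices, where no Rayleigh characterization is available, and so needs Collatz--Wielandt or fixed-point arguments. Your route covers only symmetric matrices, but that is all the paper needs.

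Two small remarks. First, the trace argument for $\lambda\ge 0$ is unnecessary. From $(1+\lambda)^{n-1}y=(I+M)^{n-1}y>0$ with $y\ge 0$, each coordinate forces both $y_i\neq 0$ and $(1+\lambda)^{n-1}>0$. Second, your argument actually shows that every real eigenvector for $\lambda$ has no zero entries. Simplicity of $\lambda$ follows immediately: for two independent eigenvectors $x,x'$, the nonzero eigenvector $x-\frac{x_1}{x'_1}x'$ would vanish in its first coordinate. The paper does not use this.
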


   % \begin{lemma}{\rm(\!\!\cite{dc})}\label{ll1}
	%	Let $G$ be a graph. Then $\rho(G)\leq \Delta(G)$.
	%\end{lemma}

    \begin{lemma}\label{l4}
        Let $a,b$ be real numbers with $0\leq a,b\leq 1$ and $a+b\geq 1$, and let $c=a\max\{2b-1,0\}+b\max \{2a-1,0\}$.  Then $a+b-1\leq c$.
    \end{lemma}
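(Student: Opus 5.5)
The inequality is elementary, so the plan is a case analysis on which of the two maxima in the definition of $c$ are active. The natural split is according to whether $a\ge \frac12$ and whether $b\ge \frac12$. By the symmetry of both $c$ and the hypothesis in $a$ and $b$, we may assume $a\ge b$ throughout. Because $a+b\ge 1$, we cannot have both $a<\frac12$ and $b<\frac12$. This leaves two cases: (i) $a\ge \frac12$ and $b\ge\frac12$; (ii) $a\ge\frac12>b$.

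\emph{Case (i).} Here both maxima are attained by the linear terms, so
\[
c=a(2b-1)+b(2a-1)=4ab-a-b .
\]
We must show $4ab-a-b\ge a+b-1$, which is equivalent to $4ab-2a-2b+1\ge 0$. This factors as $(2a-1)(2b-1)\ge 0$, and that holds since both factors are nonnegative.

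\emph{Case (ii).} Here $\max\{2a-1,0\}=2a-1$ and $\max\{2b-1,0\}=0$, so $c=b(2a-1)$. We must show $b(2a-1)\ge a+b-1$, which rearranges to $2ab-a-2b+1\ge 0$. This factors as $(1-a)(1-2b)\ge 0$. The factor $1-a$ is nonnegative because $a\le 1$, and $1-2b$ is positive because $b<\frac12$. Note that $a+b\ge1$ is used only to exclude the case where both $a$ and $b$ are below $\frac12$.

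There is no real obstacle here. The only step needing care is finding the factorizations, and in each case they follow directly by moving all terms to one side.
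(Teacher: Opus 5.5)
Your proof is correct and matches the paper's argument. You split into the same cases according to whether $a,b\ge\frac12$ and use the same factorizations, $a+b-1-c=-(2a-1)(2b-1)\le 0$ and $a+b-1-c=(a-1)(1-2b)\le 0$; the only cosmetic difference is that you invoke symmetry to assume $a\ge b$, where the paper handles the case $b\ge\frac12>a$ by ``similarly.''
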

    \begin{proof}
        If $a,b\geq \frac{1}{2}$, then $a+b-1-c=-(2a-1)(2b-1)\leq0$.
If $a\geq \frac{1}{2}>b$, then $a+b-1-c=(a-1)(1-2b)\leq0$.
Similarly, if $b\geq \frac{1}{2}>a$, then $a+b-1-c\leq0$.

This completes the proof.
    \end{proof}

    \begin{lemma}\label{l7}
        Let $a,b,c,d$ be real numbers with $a,b\geq1$, $c\geq0$ and $\frac{1}{2}\leq d\leq1$, and let $R(a,b,c)=2\max \{a,b\}+\min\{a,b\}+2c$. Then $a+c+2b(2d-1)+1-d\leq dR(a,b,c)$.
    \end{lemma}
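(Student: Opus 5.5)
The inequality is linear in $c$ and $d$, and the hypotheses $a,b\ge 1$ make the comparison essentially coordinatewise. So the plan is to move everything to one side and bound each group of terms separately, rather than expanding the max/min piece by piece. Put
\[
F:=dR(a,b,c)-\bigl(a+c+2b(2d-1)+1-d\bigr),
\]
and the goal is $F\ge 0$ on the region $a,b\ge 1$, $c\ge 0$, $\tfrac12\le d\le 1$.

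First, the $c$-terms. Their contribution to $F$ is $2dc-c=(2d-1)c\ge 0$, since $d\ge \tfrac12$ and $c\ge 0$. So it suffices to treat $c=0$.

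Second, the max/min expression. Write $R_0=2\max\{a,b\}+\min\{a,b\}$. Then
\[
R_0\ge \max\{a,b\}+\min\{a,b\}+\max\{a,b\}\ge a+2b ,
\]
because $a+b=\max\{a,b\}+\min\{a,b\}$ and $\max\{a,b\}\ge b$. This is the only step that uses the max/min structure, and it is where the asymmetry in the coefficients matters: $a$ appears with coefficient $1$ and $b$ with coefficient $2(2d-1)$ on the right-hand side.

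It then suffices to show
\[
d(a+2b)\ge a+2b(2d-1)+1-d .
\]
Rearranged, this reads $(1-d)(2b-a)-(1-d)\le$ something, so it is cleaner to expand directly:
\[
d(a+2b)-a-4bd+2b-1+d=(1-d)(2b-a-1).
\]
This need not be nonnegative, since $a$ could be large. That means the bound $R_0\ge a+2b$ is too lossy when $a>b$. In that case I would instead use $R_0=2a+b$.

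So I split into two cases.

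If $b\ge a$, then $R_0=2b+a$ and $F=(1-d)(2b-a-1)\ge (1-d)(b-1)\ge 0$, using $b\ge a$ and $b\ge 1$.

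If $a>b$, then $R_0=2a+b$, and
\[
F=d(2a+b)-a-4bd+2b-1+d=(2d-1)a+(2-3d)b+d-1 .
\]
Substitute $a\ge b$ (the coefficient $2d-1\ge 0$ allows this) to get
\[
F\ge (1-d)b+d-1=(1-d)(b-1)\ge 0 .
\]
I would double-check the signs in this case carefully, since it is the only place where the estimate is tight. Equality holds when $d=1$, or when $b=1$ and $a=b$.
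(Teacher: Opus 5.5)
Your proof is correct and is essentially the paper's: after setting aside the harmless term $(2d-1)c\ge 0$, you split on whether $a\ge b$ or $b>a$. You arrive at the same identities the paper writes down, namely $F=(2d-1)(a-b+c)+(1-d)(b-1)$ and $F=(1-d)(2b-a-1)+(2d-1)c$; the abandoned detour through $R_0\ge a+2b$ can simply be deleted.

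Your closing equality remark is inaccurate. For $a>b$ and $d=1$ one gets $F=a-b+c>0$, so $d=1$ does not force equality. Conversely, equality does occur at $d=\tfrac12$, $b=1$, $c=0$ with any $a>1$, which your description misses. Since the lemma asserts only the inequality, this does not affect the proof.
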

\begin{proof}
Clearly, we have $$dR(a,b,c)-(a+c+2b(2d-1)+1-d)=\begin{cases}
	(2d-1)(a-b+c)+(1-d)(b-1)\geq0,&\text{if}\ a\geq b; \\
	
	(1-d)(2b-a-1)+c(2d-1)\geq0,&\text{if}\ b>a.
\end{cases}$$

    %If $a\geq b$, then $R(a,b,c)-(a+c+2b(2d-1)+1-d)=d(2a+b+2c)-(a+c+2b(2d-1)+1-d)=(2d-1)(a-b+c)+(1-d)(b-1)\geq0$.

    %If $b>a$, then $R(a,b,c)-(a+c+2b(2d-1)+1-d)=(1-d)(2b-a-1)+c(2d-1)\geq0$.

            This completes the proof.
\end{proof}

 \section{The proof of Theorem \ref{t1}}\label{sec-pft1}
 Let $G$ be an $m$-edge graph with no isolated vertices such that $\rho(G)\geq \sqrt{m}$ and $G$ is not isomorphic to any complete bipartite graph. If $G$ is disconnected, then there exists a connected component $H$ of $G$ such that $\rho(H)=\rho(G)\geq\sqrt{m}$ and $bk(H)\leq bk(G)$. Clearly, $H$ is not a complete bipartite graph. Otherwise, $\sqrt{m}\leq \rho(H)=\sqrt{e(H)}$, and thus $e(H)=m$, which implies that $G$ contains an isolated vertex, a contradiction. Henceforth, we always assume that $G$ is connected.  For convenience,  we write $\rho=\rho(G)$. Then there exists a positive eigenvector $\textbf{x}=(x_v)_{v\in V(G)}$  corresponding to $\rho$  by Lemma \ref{l1}.

For any  $e=uv\in E(G)$, let $C_e=N(u)\cap N(v)$, $t_e=\sum\limits_{w\in C_e}x_w$, $r_e=\sum\limits_{w\in (N(u)\cup N(v))\setminus (C_e\cup\{u,v\})}x_w$ and $z_e=\sum\limits_{w\in V(G)\setminus (N(u)\cup N(v))}x_w$.  Let $\mathcal{K}_3(G)$ denote the set of all triangles in $G$. Then \begin{align}
    \sum\limits_{e\in E(G)}t_e= \sum\limits_{\mathcal{T}\in \mathcal{K}_3(G)}\sum\limits_{w\in V(\mathcal{T})}x_w.\label{b1}
\end{align}

\begin{lemma}\label{l91}
    $\sum\limits_{e\in E(G)}z_e= (m-\rho^2)\sum\limits_{v\in V(G)}x_v+ \sum\limits_{e\in E(G)}t_e$.
\end{lemma}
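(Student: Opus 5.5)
Our plan is to use a double-counting argument on the eigenvector equations. For each edge $e=uv$, the closed neighbourhood union $N(u)\cup N(v)$ splits into $\{u,v\}$, $C_e$, and the remaining vertices counted in $r_e$. Since $u\in N(v)$ and $v\in N(u)$, this gives the identity
\[
\sum_{w\in V(G)}x_w = x_u+x_v+t_e+r_e+z_e .
\]
Summing over all edges yields
\[
m\sum_{w}x_w=\sum_{e}(x_u+x_v)+\sum_e t_e+\sum_e r_e+\sum_e z_e .
\]
So it suffices to compute $\sum_e(x_u+x_v+t_e+r_e)$, which is the sum over edges of $\sum_{w\in N(u)\cup N(v)}x_w$.

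For this, we use the eigen-equation $\rho x_u=\sum_{w\in N(u)}x_w$. By inclusion--exclusion,
\[
\sum_{w\in N(u)\cup N(v)}x_w=\sum_{w\in N(u)}x_w+\sum_{w\in N(v)}x_w-\sum_{w\in C_e}x_w=\rho x_u+\rho x_v-t_e .
\]
Summing over edges, and using $\sum_{uv\in E}(x_u+x_v)=\sum_v d(v)x_v$ together with $\sum_v d(v)x_v=\sum_v\sum_{w\in N(v)}x_w=\rho\sum_w x_w$, we get
\[
\sum_e(\rho x_u+\rho x_v)=\rho\sum_v d(v)x_v=\rho^2\sum_v x_v .
\]
Hence $\sum_e(x_u+x_v+t_e+r_e)=\rho^2\sum_vx_v-\sum_e t_e$.

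Substituting this into the summed identity gives
\[
m\sum_vx_v=\rho^2\sum_vx_v-\sum_et_e+\sum_ez_e ,
\]
which rearranges to the claim. The only point needing care is the bookkeeping of $u$ and $v$ themselves: they lie in $N(v)$ and $N(u)$ respectively but not in $C_e$ (no loops). The definition of $r_e$ excludes $C_e\cup\{u,v\}$, so the partition is exact, and there is no real obstacle beyond this check.
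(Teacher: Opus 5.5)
Your proposal is correct and follows essentially the same route as the paper: the partition $\sum_w x_w = x_u+x_v+t_e+r_e+z_e$, the eigen-equation identity $\rho(x_u+x_v)=x_u+x_v+2t_e+r_e$ (your inclusion--exclusion form), and $\sum_{uv\in E(G)}(x_u+x_v)=\rho\sum_v x_v$. The only difference is that the paper solves for $z_e$ edge by edge before summing, while you sum first and then substitute, which is immaterial.
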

\begin{proof}
    For any  $e=uu'\in E(G)$, we have $\rho (x_u+x_{u'})=x_u+x_{u'}+2t_e+r_e$ and $\sum\limits_{v\in V(G)}x_v=x_u+x_{u'}+t_e+r_e+z_e$, which implies that $z_e=\sum\limits_{v\in V(G)}x_v-\rho (x_u+x_{u'})+t_e$. Thus we have \begin{align*}
\sum\limits_{e\in E(G)}z_e&=m\sum\limits_{v\in V(G)}x_v-\rho \sum\limits_{uu'\in E(G)}(x_u+x_{u'})+\sum\limits_{e\in E(G)}t_e\\
&=m\sum\limits_{v\in V(G)}x_v-\rho^2\sum\limits_{v\in V(G)}x_v+\sum\limits_{e\in E(G)}t_e\\
&=(m-\rho^2)\sum\limits_{v\in V(G)}x_v+ \sum\limits_{e\in E(G)}t_e.
    \end{align*}

    This completes the proof.
\end{proof}

\begin{lemma}\label{l15}
    $\mathcal{K}_3(G)\neq \emptyset$.
\end{lemma}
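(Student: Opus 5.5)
We argue by contradiction: suppose $\mathcal{K}_3(G)=\emptyset$, so that $G$ is a connected triangle-free graph with $\rho\geq\sqrt{m}$ which is not complete bipartite. Two routes are available, and we take the direct one through Lemma \ref{l91}, with Nosal's theorem and its equality case as a backup.

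If $G$ is triangle-free, then $C_e=\emptyset$ for every $e\in E(G)$. Hence $t_e=0$ for all $e$, and by (\ref{b1}) also $\sum_{e}t_e=0$. Lemma \ref{l91} then gives
\begin{align*}
\sum_{e\in E(G)} z_e=(m-\rho^2)\sum_{v\in V(G)}x_v\leq 0 .
\end{align*}
Since $\mathbf{x}>0$, every $z_e\geq 0$. So all $z_e=0$ and $\rho^2=m$. Now $z_e=0$ for $e=uv$ means $N(u)\cup N(v)=V(G)$. Because $G$ is triangle-free, $N(u)$ and $N(v)$ are independent sets, with $u\in N(v)$ and $v\in N(u)$. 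They are disjoint, since a common neighbour would create a triangle.

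Fix one edge $uv$ and set $A=N(v)$, $B=N(u)$. Then $V(G)=A\cup B$ is a partition into two independent sets, so $G$ is bipartite with parts $A,B$. Next take any $a\in A$ and $b\in B$. If $ab\in E(G)$, then $z_{ab}=0$ gives $N(a)\cup N(b)=V(G)$. Since $N(a)\subseteq B$ and $N(b)\subseteq A$, this forces $N(a)=B$ and $N(b)=A$.

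To conclude that $G$ is complete bipartite, we use connectivity together with this property: every edge $ab$ satisfies $N(a)=B$ and $N(b)=A$. Every vertex is incident to some edge, because $G$ is connected with at least one edge. Hence every $a\in A$ has $N(a)=B$, so $G=K_{|A|,|B|}$, contradicting the assumption on $G$.

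The only delicate point is the positivity and sign bookkeeping, namely that $z_e\geq 0$ with $\sum_e z_e\leq 0$ forces $z_e=0$ for every $e$. The rest is routine.

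Alternatively, one could cite the Nosal bound $\rho\le\sqrt m$ with Nikiforov's equality characterization directly. The argument above, however, is self-contained given Lemma \ref{l91}, and we would present that.
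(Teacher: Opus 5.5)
Your proof is correct and follows the paper's argument: assuming $G$ is triangle-free gives $\sum_e t_e=0$, and then Lemma \ref{l91}, together with $\rho\ge\sqrt m$ and the positivity of $\mathbf{x}$, forces $z_e=0$ (that is, $N(u)\cup N(v)=V(G)$) for every edge $uv$. The paper simply asserts that this makes $G$ complete bipartite, and your derivation of that step is a correct filling-in of the detail: the disjoint independent neighbourhoods give the bipartition, every edge $ab$ forces $N(a)=B$ and $N(b)=A$, and there are no isolated vertices.
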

\begin{proof}
    	Suppose to the contrary that $\mathcal{K}_3(G)= \emptyset$.  Then $ \sum\limits_{e\in E(G)}t_e=0$ by \eqref{b1}, and thus $0\leq\sum\limits_{e\in E(G)}z_e= (m-\rho^2)\sum\limits_{v\in V(G)}x_v\leq0$ by Lemma \ref{l91} and $\rho\geq \sqrt{m}$. This implies $\sum\limits_{e\in E(G)}z_e=0$, that is, $z_e=0$ for any $e\in E(G)$, and $V(G)=N(u)\cup N(v)$ for any $e=uv\in E(G)$. Then $G$ is a complete bipartite graph by the definition of $z_e$, a contradiction.  Therefore, $\mathcal{K}_3(G)\neq \emptyset$. 
\end{proof}

For any $\mathcal{T}\in \mathcal{K}_3(G)$, let $Y_i(\mathcal{T})=\sum\limits_{v\notin V(\mathcal{T}),|N(v)\cap V(\mathcal{T})|=i }x_v$. Then $i\in \{0,1,2,3\}$ and \begin{align}
    \sum\limits_{\mathcal{T}\in \mathcal{K}_3(G)}Y_1(\mathcal{T})&=\sum\limits_{uv\in E(G)}\sum\limits_{w\in V(G)\setminus (N(u)\cup N(v))}|N(w)\cap C_{uv}|x_w\nonumber\\
    &\leq \sum\limits_{uv\in E(G)}|C_{uv}|z_{uv}\nonumber\\&\leq bk(G)\sum\limits_{uv\in E(G)}z_{uv}.\label{b4}
\end{align}
We now give a lower bound for $\sum\limits_{\mathcal{T}\in \mathcal{K}_3(G)}Y_1(\mathcal{T})$.

\begin{lemma}\label{l92}
    $\sum\limits_{\mathcal{T}\in \mathcal{K}_3(G)}Y_1(\mathcal{T})\geq (\rho-2bk(G))\sum\limits_{e\in E(G)}t_e$.
\end{lemma}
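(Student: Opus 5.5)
Fix a triangle $\mathcal{T}$ with vertex set $\{u,v,w\}$ and apply the eigenvalue equation at each of its vertices. Summing the three equations gives
\begin{align*}
\rho(x_u+x_v+x_w)=2(x_u+x_v+x_w)+Y_1(\mathcal{T})+2Y_2(\mathcal{T})+3Y_3(\mathcal{T}),
\end{align*}
since each vertex of the triangle sees the other two, and an outside vertex $y$ with $|N(y)\cap V(\mathcal{T})|=i$ contributes $i\,x_y$. This yields the identity
\begin{align*}
Y_1(\mathcal{T})=(\rho-2)\sum_{z\in V(\mathcal{T})}x_z-2Y_2(\mathcal{T})-3Y_3(\mathcal{T}).
\end{align*}
So the plan is to bound $2Y_2(\mathcal{T})+3Y_3(\mathcal{T})$ from above, summed over all triangles, by roughly $(2bk(G)-2)\sum_{\mathcal{T}}\sum_{z\in V(\mathcal{T})}x_z$. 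Then \eqref{b1} converts $\sum_{\mathcal{T}}\sum_{z\in V(\mathcal{T})}x_z$ into $\sum_e t_e$.

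To control these terms I would double count. A vertex $y$ counted in $Y_2(\mathcal{T})$ is adjacent to exactly two vertices of $\mathcal{T}$, say $u,v$, so $y\in C_{uv}$. A vertex counted in $Y_3(\mathcal{T})$ lies in $C_{uv}$, $C_{vw}$ and $C_{uw}$. Hence
\begin{align*}
2Y_2(\mathcal{T})+3Y_3(\mathcal{T})=\sum_{e\in E(\mathcal{T})}\sum_{y\in C_e\setminus V(\mathcal{T})}x_y=\sum_{e\in E(\mathcal{T})}\bigl(t_e-x_{\text{opp}(e)}\bigr),
\end{align*}
where $\text{opp}(e)$ is the vertex of $\mathcal{T}$ not on $e$. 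Summing over all triangles, each edge $e$ lies in exactly $|C_e|$ triangles, and the edge-triangle pairs contribute $\sum_e |C_e|\,t_e-\sum_e t_e$. Therefore
\begin{align*}
\sum_{\mathcal{T}}\bigl(2Y_2(\mathcal{T})+3Y_3(\mathcal{T})\bigr)=\sum_{e}(|C_e|-1)\,t_e\le (bk(G)-1)\sum_e t_e.
\end{align*}

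Combining with the identity and \eqref{b1}, I get
\begin{align*}
\sum_{\mathcal{T}}Y_1(\mathcal{T})\ge(\rho-2)\sum_e t_e-(bk(G)-1)\sum_e t_e=(\rho-1-bk(G))\sum_e t_e.
\end{align*}
This is at least $(\rho-2bk(G))\sum_e t_e$ because $bk(G)\ge1$, which holds by Lemma \ref{l15}. The stated bound is weaker, so there is slack.

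The main obstacle is the bookkeeping in the double count: getting the multiplicities of $Y_2$ and $Y_3$ vertices right, and correctly excluding the third vertex of $\mathcal{T}$ from $C_e$. I would verify this on a small example such as $K_4$ before writing it up.
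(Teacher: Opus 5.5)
\textbf{The gap is a miscount of $Y_2$ in your double count.} Write $\mathcal{T}=\{u,v,w\}$. A vertex $y\notin V(\mathcal{T})$ adjacent to exactly $u$ and $v$ lies in $C_{uv}$. It does not lie in $C_{uw}$ or $C_{vw}$, because it is not adjacent to $w$. So it is counted once, not twice, and the correct identity is
\[
\sum_{e\in E(\mathcal{T})}\sum_{y\in C_e\setminus V(\mathcal{T})}x_y=Y_2(\mathcal{T})+3Y_3(\mathcal{T}).
\]
As a result, your intermediate bound $\sum_{\mathcal{T}}Y_1(\mathcal{T})\ge(\rho-1-bk(G))\sum_e t_e$ is false. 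The octahedron $K_{2,2,2}$ is a counterexample that satisfies all the standing hypotheses ($\rho=4\ge\sqrt{12}$, connected, not complete bipartite). Take $x\equiv 1$; then $bk(G)=2$ and $\sum_e t_e=24$. Each of the $8$ triangles has $Y_1=Y_3=0$ and $Y_2=3$, so the left side is $0$ while your bound asserts $24$. Your proposed check on $K_4$ would not have caught this: there every outside vertex is a $Y_3$-vertex, so $Y_2\equiv 0$.

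\textbf{The repair gives exactly the paper's argument.} Since $Y_3(\mathcal{T})\ge 0$, we have $2Y_2(\mathcal{T})+3Y_3(\mathcal{T})\le 2\bigl(Y_2(\mathcal{T})+3Y_3(\mathcal{T})\bigr)$. Your edge--triangle count, which is correct, then gives
\[
\sum_{\mathcal{T}}\bigl(2Y_2(\mathcal{T})+3Y_3(\mathcal{T})\bigr)\le 2\sum_e(|C_e|-1)t_e\le 2(bk(G)-1)\sum_e t_e .
\]
Hence $\sum_{\mathcal{T}}Y_1(\mathcal{T})\ge(\rho-2bk(G))\sum_e t_e$, and the appeal to Lemma \ref{l15} for $bk(G)\ge 1$ is no longer needed. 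There is no slack in the stated bound. The octahedron attains equality, as does any $K_4$-free graph in which every edge with $t_e>0$ has $|C_e|=bk(G)$.
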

\begin{proof}
    Clearly, $\rho\sum\limits_{v\in V(\mathcal{T})}x_v=2\sum\limits_{v\in V(\mathcal{T})}x_v+Y_1(\mathcal{T})+2Y_2(\mathcal{T})+3Y_3(\mathcal{T})$ for each $\mathcal{T}\in \mathcal{K}_3(G)$. Thus we have \begin{equation}
        Y_1(\mathcal{T})\geq (\rho-2)\sum\limits_{v\in V(\mathcal{T})}x_v-2(Y_2(\mathcal{T})+3Y_3(\mathcal{T})),\label{b2}
    \end{equation}
where $Y_2(\mathcal{T})+3Y_3(\mathcal{T})=\sum\limits_{e\in E(\mathcal{T})}\sum\limits_{w\in C_e\setminus V(\mathcal{T})}x_w$. Combining $|C_e|\leq bk(G)$ and $t_e\geq0$, we obtain \begin{align*}
    \sum\limits_{\mathcal{T}\in \mathcal{K}_3(G)}(Y_2(\mathcal{T})+3Y_3(\mathcal{T}))&=\sum\limits_{\mathcal{T}\in \mathcal{K}_3(G)}\sum\limits_{e\in E(\mathcal{T})}\sum\limits_{w\in C_e\setminus V(\mathcal{T})}x_w\\&=\sum\limits_{e\in E(G)}\sum\limits_{w\in C_{e}}(t_e-x_w)\\
    &=\sum\limits_{e\in E(G)}(|C_e|-1)t_e\\
    &\leq (bk(G)-1)\sum\limits_{e\in E(G)}t_e.
\end{align*}

Combining \eqref{b1} and \eqref{b2}, we have \begin{align*}
    \sum\limits_{\mathcal{T}\in \mathcal{K}_3(G)}Y_1(\mathcal{T})&\geq (\rho-2) \sum\limits_{\mathcal{T}\in \mathcal{K}_3(G)}\sum\limits_{v\in V(\mathcal{T})}x_v-2 \sum\limits_{\mathcal{T}\in \mathcal{K}_3(G)}(Y_2(\mathcal{T})+3Y_3(\mathcal{T}))\\
    &\geq (\rho-2) \sum\limits_{e\in E(G)}t_e-2(bk(G)-1)\sum\limits_{e\in E(G)}t_e\\&=(\rho-2bk(G))\sum\limits_{e\in E(G)}t_e
\end{align*}
as desired.
\end{proof}

\noindent\textbf{\textit{Proof of Theorem \ref{t1}.}} By \eqref{b4}, Lemmas \ref{l91}, \ref{l15} and \ref{l92}, we have $(\rho-2bk(G))\sum\limits_{e\in E(G)}t_e\leq bk(G)\sum\limits_{e\in E(G)}z_e\leq bk(G)((m-\rho^2)\sum\limits_{v\in V(G)}x_v+ \sum\limits_{e\in E(G)}t_e)$, and thus $(\rho-3bk(G))\sum\limits_{e\in E(G)}t_e\leq bk(G)(m-\rho^2)\sum\limits_{v\in V(G)}x_v\leq 0$. Therefore, $bk(G)\geq\frac{\rho}{3}=\frac{\rho(G)}{3}$ since $\sum\limits_{e\in E(G)}t_e>0$.
This completes the proof. $\hfill\square$

\vspace{0.5cm}

From the above proof, we immediately obtain the following corollary.

\begin{corollary}
     Every  Nosal graph $G$ satisfies $bk(G)>\frac{\rho(G)}{3}$.
\end{corollary}

\begin{remark}
    To prove Theorem \ref{t1}, we use a weighted double-counting argument, giving both upper and lower bounds on the total weight of such vertices that are adjacent to exactly one vertex of a triangle. The upper bound arises from a combinatorial double-counting argument, while the lower bound is based on summing the Perron eigenvalue equations over each triangle and then combining this with  $bk(G)\geq |C_e|$. A comparison of the two bounds yields $bk(G)\geq \frac{\rho(G)}{3}$.
\end{remark}

 \section{The proof of Theorem \ref{t2}}\label{sec-pft2}

% Let $G$ be an $m$-edge Nosal graph. If $G$ is disconnected, then there exists a connected component $H$ of $G$ such that $\rho(H)=\rho(G)>\sqrt{m}$. Thus $H$ is a Nosal graph with $bk(H)<bk(G)$ and $\tau(H)<\tau(G)$. Henceforth, we always assume that $G$ is a connected Nosal graph. 
 
 Let $G$ be a connected $m$-edge  graph. Then
 there exists a positive eigenvector $\textbf{x}=(x_v)_{v\in V(G)}$ with $\max\limits_{v\in V(G)}x_v=1$ corresponding to $\rho(G)$  by Lemma \ref{l1}. Let $u^*\in V(G)$ with $x_{u^*}=\max\limits_{v\in V(G)}x_v=1$,  $U=N_G(u^*)$ and $W=V(G)\setminus (\{u^*\}\cup U)$. Then $\rho(G)=\rho(G)x_{u^*}=\sum\limits_{u\in U}x_u$. Thus we have \begin{align}
     \rho^2(G)=\sum\limits_{u\in U}\rho(G)x_u=|U|+\sum\limits_{uv\in E(G[U])}(x_u+x_v)+\sum\limits_{w\in W}d_U(w)x_w.\label{s1}
 \end{align}

 Note that $m=|U|+e(G[U])+e_G(U,W)+e(G[W])$, $e_G(U,W)=\sum\limits_{w\in W}d_U(w)$ and $e(G[W])=\frac{1}{2}\sum\limits_{w\in W}d_W(w)$. Combining \eqref{s1}, we have 
 \begin{align}
      \rho^2(G)&=m-e(G[U])-e_G(U,W)-e(G[W])+\sum\limits_{uv\in E(G[U])}(x_u+x_v)+\sum\limits_{w\in W}d_U(w)x_w\nonumber\\
      &=m+\sum\limits_{uv\in E(G[U])}(x_u+x_v-1)-\sum\limits_{w\in W}(d_U(w)(1-x_w)+\frac{1}{2}d_W(w))\nonumber\\
      &=m+\sum\limits_{uv\in E(G[U])}(x_u+x_v-1)-\sum\limits_{w\in W}f(w),\label{s2}
 \end{align}
where $f(w)=d_U(w)(1-x_w)+\frac{1}{2}d_W(w)$ for each $w\in W$.

Let $S_w=\sum\limits_{v\in U\setminus N_G(w)}x_v$ and $Z_w=\sum\limits_{z\in N_W(w) }x_z$ for each $w\in W$. Then \begin{align}
    \rho(G)x_w=\sum\limits_{v\in N_U(w) }x_v+Z_w=\rho(G)-S_w+Z_w.\label{s4}
\end{align} Hence, we have \begin{align}
    S_w-Z_w=\rho(G)(1-x_w)\geq 0.\label{s3}
\end{align}

\begin{lemma}\label{l2}
    $\sum\limits_{w\in W }f(w)\geq \frac{1}{2}\sum\limits_{v\in U }(x_v\sum\limits_{w\in W\setminus N_G(v) }x_w)$.
\end{lemma}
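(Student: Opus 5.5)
The plan is to rewrite the right-hand side as a sum over $W$ and then prove the inequality one vertex $w\in W$ at a time. Exchanging the order of summation gives
\[
\sum_{v\in U}\Bigl(x_v\sum_{w\in W\setminus N_G(v)}x_w\Bigr)=\sum_{w\in W}x_w\sum_{v\in U\setminus N_G(w)}x_v=\sum_{w\in W}x_wS_w .
\]
So it suffices to show, for every $w\in W$, the pointwise bound $x_wS_w\le 2f(w)$. Summing this bound over $W$ then gives the lemma.

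\textbf{Step 1.} Use \eqref{s3} to replace $S_w$:
\[
x_wS_w=\rho(G)\,x_w(1-x_w)+x_wZ_w .
\]

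\textbf{Step 2.} Bound the factor $\rho(G)x_w$ using \eqref{s4}. We have $\rho(G)x_w=\sum_{v\in N_U(w)}x_v+Z_w$. Since every entry is at most $1$, this gives $\rho(G)x_w\le d_U(w)+Z_w$.

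\textbf{Step 3.} Multiply the bound of Step 2 by $1-x_w\ge 0$ and substitute into Step 1:
\[
x_wS_w\le d_U(w)(1-x_w)+Z_w(1-x_w)+x_wZ_w=d_U(w)(1-x_w)+Z_w .
\]

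\textbf{Step 4.} Since $x_z\le 1$ for every $z$, we have $Z_w\le d_W(w)$. Hence
\[
x_wS_w\le d_U(w)(1-x_w)+d_W(w)\le 2d_U(w)(1-x_w)+d_W(w)=2f(w),
\]
where the middle inequality uses $d_U(w)(1-x_w)\ge 0$. This is exactly the pointwise bound we need.

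The only delicate point is Step 2. One must not bound $S_w$ directly, since it can be large. Instead, the $\rho(G)$ coming from \eqref{s3} must be absorbed back into $x_w$ through the eigenvalue equation \eqref{s4}. The rest is routine use of $0<x_v\le 1$ together with the normalization $x_{u^*}=1$.
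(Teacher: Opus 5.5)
Your proof is correct and follows essentially the paper's route. Both arguments prove the pointwise bound $f(w)\ge \frac{1}{2}x_wS_w$ from \eqref{s3}, \eqref{s4} and $0<x_v\le 1$ (that is, $d_U(w)\ge \rho(G)-S_w$ and $d_W(w)\ge Z_w$), and then swap the order of summation. The only difference is cosmetic: you upper-bound $x_wS_w$ and give away a nonnegative term $d_U(w)(1-x_w)$, whereas the paper lower-bounds $f(w)$ directly and keeps the explicit nonnegative remainder $\frac{(S_w-Z_w)(\rho(G)-S_w)}{2\rho(G)}$.
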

\begin{proof}
    Since $0\leq x_v\leq1$ for each $v\in V(G)$, $d_U(w)\geq \sum\limits_{v\in N_U(w) }x_v=\rho(G)-S_w$ and $d_W(w)\geq Z_w$  for each $w\in W$. Combining \eqref{s4}, \eqref{s3} and $\rho(G)\geq S_w$, we have 
    \allowdisplaybreaks
    \begin{align*}
        f(w)&=d_U(w)(1-x_w)+\frac{1}{2}d_W(w)\\
        &\geq (\rho(G)-S_w)\frac{S_w-Z_w}{\rho(G)}+\frac{1}{2}Z_w\\
        &=(\rho(G)-S_w)\frac{S_w-Z_w}{\rho(G)}+\frac{1}{2}Z_w+\frac{x_wS_w}{2}-\frac{S_w}{2}(1-\frac{S_w-Z_w}{\rho(G)})\\
        &=\frac{x_wS_w}{2}+\frac{(S_w-Z_w)(\rho(G)-S_w)}{2\rho(G)}\\
        &\geq \frac{x_wS_w}{2}.
    \end{align*}
    Thus 
        $\sum\limits_{w\in W }f(w)\geq\sum\limits_{w\in W }\frac{x_wS_w}{2}
        =\frac{1}{2} \sum\limits_{w\in W } (x_w\sum\limits_{v\in U\setminus N_G(w)}x_v)
        =\frac{1}{2} \sum\limits_{v\in U } (x_v\sum\limits_{w\in W\setminus N_G(v)}x_w)$.

    This completes the proof.
\end{proof}

Let $\mathcal{B}=\{uv\in E(G[U])\mid x_u+x_v\geq1\}$, and let $d_{\mathcal{B}}(v)=|\{u\mid uv\in \mathcal{B}\}|$. 
Let   \begin{equation*}
			 	\beta_v= 
			 	\begin{cases}
			 		\max\{x_u\mid uv\in\mathcal{B}\},&\text{if}\ d_{\mathcal{B}}(v)>0;\\
			 		0,&\text{if }\ d_{\mathcal{B}}(v)=0,
			 	\end{cases}
			 \end{equation*} and $\gamma_v=\max\{2\beta_v-1,0\}$.
Then we have the following lemma.
\begin{lemma}\label{l5}
    If $\mathcal{B}\neq\emptyset$, then $\sum\limits_{uv\in \mathcal{B}}(x_u+x_v-1)\leq \sum\limits_{v\in U} d_{\mathcal{B}}(v)x_v\gamma_v$.
\end{lemma}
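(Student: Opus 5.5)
The plan is to bound each edge of $\mathcal{B}$ separately using Lemma \ref{l4}, then replace the resulting edge-dependent quantities by the vertex quantities $\gamma_v$, and finally sum over $\mathcal{B}$ by regrouping according to endpoints.

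\textbf{Per-edge bound.} Fix $uv\in\mathcal{B}$ and set $a=x_u$, $b=x_v$. Since $\mathbf{x}$ is positive with $\max_v x_v=1$, we have $0\le a,b\le 1$. By the definition of $\mathcal{B}$, $a+b\ge 1$. Lemma \ref{l4} then gives
\begin{equation*}
x_u+x_v-1\le x_u\max\{2x_v-1,0\}+x_v\max\{2x_u-1,0\}.
\end{equation*}

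\textbf{Passing to $\gamma$.} Because $uv\in\mathcal{B}$, we have $d_{\mathcal{B}}(u)>0$, so $\beta_u=\max\{x_w\mid uw\in\mathcal{B}\}\ge x_v$. The map $t\mapsto\max\{2t-1,0\}$ is nondecreasing, hence $\max\{2x_v-1,0\}\le\max\{2\beta_u-1,0\}=\gamma_u$. Symmetrically, $\max\{2x_u-1,0\}\le\gamma_v$. Since $x_u,x_v\ge 0$, this yields
\begin{equation*}
x_u+x_v-1\le x_u\gamma_u+x_v\gamma_v .
\end{equation*}

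\textbf{Summation.} Summing over all $uv\in\mathcal{B}$, each vertex $v$ contributes the term $x_v\gamma_v$ once for each $\mathcal{B}$-edge at $v$, i.e.\ exactly $d_{\mathcal{B}}(v)$ times. Every endpoint of an edge in $\mathcal{B}\subseteq E(G[U])$ lies in $U$, and vertices with $d_{\mathcal{B}}(v)=0$ contribute nothing. Therefore
\begin{equation*}
\sum_{uv\in\mathcal{B}}(x_u+x_v-1)\le\sum_{v\in U}d_{\mathcal{B}}(v)x_v\gamma_v .
\end{equation*}

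There is no real obstacle here. The only point needing care is the direction of the monotonicity step: the factor attached to $x_u$ involves the other endpoint $x_v$, and that is bounded by $\beta_u$, not $\beta_v$. This is exactly why $\beta_u$ is defined as a maximum over $\mathcal{B}$-neighbours of $u$.
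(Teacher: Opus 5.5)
Your proposal is correct and is essentially the paper's proof: you apply Lemma \ref{l4} to each edge $uv\in\mathcal{B}$, bound $\max\{2x_v-1,0\}\le\gamma_u$ and $\max\{2x_u-1,0\}\le\gamma_v$ using the definition of $\beta$, and regroup the sum by endpoints to obtain the factor $d_{\mathcal{B}}(v)$. Your remark about which endpoint's $\beta$ controls which term makes explicit the pairing the paper uses without comment.
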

\begin{proof}
    Clearly,  $0<x_u,x_v\leq1$ and $x_u+x_v\geq1$  for each edge $uv\in \mathcal{B}$. By Lemma \ref{l4}, we have $x_u+x_v-1\leq x_u\max\{2x_v-1,0\}+x_v\max\{2x_u-1,0\}$. On the other hand, we have $\gamma_v\geq \max\{2x_u-1,0\}$ and $\gamma_u\geq \max\{2x_v-1,0\}$. Thus  $\sum\limits_{uv\in \mathcal{B}}(x_u+x_v-1)\leq\sum\limits_{uv\in \mathcal{B}}(x_u\gamma_u+x_v\gamma_v)=\sum\limits_{v\in U} d_{\mathcal{B}}(v)x_v\gamma_v$.

    This completes the proof.
\end{proof}

\begin{comment}
    \begin{proposition}\label{p1}
    If $\mathcal{B}\neq\emptyset$ and $\sum\limits_{w\in W\setminus N_G(v)}x_w\geq 2d_{\mathcal{B}}(v)\gamma_v$ for each $v\in U$, then $\rho(G)\leq \sqrt{m}$.
\end{proposition}
\begin{proof}
    By Lemma \ref{l2} and Lemma \ref{l5}, we have 
        $\sum\limits_{w\in W}f(w)\geq  \frac{1}{2}\sum\limits_{v\in U }x_v\sum\limits_{w\in W\setminus N_G(v) }x_w
        \geq \sum\limits_{v\in U }d_{\mathcal{B}}(v)x_v\gamma_v
        \geq \sum\limits_{uv\in \mathcal{B}}(x_u+x_v-1)$.
    Combining $x_u+x_v-1<0$ for each $uv\in E(G[U])\setminus \mathcal{B}$ and \eqref{s2}, we have $ \rho^2(G)=m+\sum\limits_{uv\in \mathcal{B}}(x_u+x_v-1)-\sum\limits_{w\in W}f(w)+\sum\limits_{uv\in E(G[U])\setminus\mathcal{B}}(x_u+x_v-1)\leq m$.

    Therefore, $\rho(G)\leq \sqrt{m}$.
\end{proof}
\end{comment}

 For $uv \in \mathcal{B}$, we define $c_{uv}=|N_W(u)\cap N_W(v)|$ and $R(d_U(u),d_U(v),c_{uv})=2\max\{d_U(u),$ $d_U(v)\}+\min\{d_U(u),d_U(v)\}+2c_{uv}$.
\begin{lemma}\label{l6}
    Let $uv\in\mathcal{B}$. Then $\tau(G)\geq R(d_U(u),d_U(v),c_{uv})$.
\end{lemma}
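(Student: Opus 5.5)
The plan is to exhibit explicitly at least $R(d_U(u),d_U(v),c_{uv})$ distinct triangular edges near the edge $uv$ and the vertex $u^*$. Membership in $\mathcal{B}$ is not actually needed; we only use that $uv\in E(G[U])$, so $u,v\in U=N_G(u^*)$. Write $a=d_U(u)$, $b=d_U(v)$ and $c=c_{uv}$. Note that $v\in N_U(u)$ and $u\in N_U(v)$, so $a,b\geq 1$.

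We sort the triangular edges into four disjoint families.

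\emph{Edges $u^*x$.} For every $x\in N_U(u)$, the vertices $u^*,u,x$ form a triangle, since $u,x\in N_G(u^*)$. So $u^*x$ and $ux$ are triangular. The same holds with $v$ in place of $u$. In particular $u^*x$ is triangular for every $x\in N_U(u)\cup N_U(v)$. This set contains $N_U(u)\cup\{u\}$, which has $a+1$ elements because $u\notin N_U(u)$. Symmetrically it contains $N_U(v)\cup\{v\}$, which has $b+1$ elements. Hence this family has at least $\max\{a,b\}+1$ edges.

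\emph{Edges inside $U$ at $u$ or $v$.} The edges $ux$ with $x\in N_U(u)$ number $a$, and the edges $vx$ with $x\in N_U(v)$ number $b$. The only edge that can be counted in both is $uv$. So this family has at least $a+b-1$ edges, all lying in $G[U]$.

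\emph{Edges to $W$.} Each $w\in N_W(u)\cap N_W(v)$ gives the triangle $uvw$, so $uw$ and $vw$ are triangular. This yields $2c$ distinct edges, each joining $U$ to $W$.

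The three families are pairwise disjoint: the first consists of edges at $u^*$, the second of edges inside $U$, and the third of edges between $U$ and $W$. Summing,
\[
\tau(G)\geq \max\{a,b\}+1+a+b-1+2c=2\max\{a,b\}+\min\{a,b\}+2c=R(a,b,c).
\]

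The only delicate point is the count of the edges $u^*x$. One must remember that $u$ itself lies in $N_U(v)$, which gives the $+1$. Without it the total falls one short, since the second family loses one edge to the double-counted $uv$.
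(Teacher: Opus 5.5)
Your proof is correct and follows essentially the same argument as the paper: the same three disjoint families of triangular edges are used. These are the edges $u^*z$ for $z\in N_U(u)\cup N_U(v)$, the edges of $G[U]$ at $u$ or $v$, and the $2c_{uv}$ edges to common neighbours in $W$, giving the count $(\max\{d_U(u),d_U(v)\}+1)+(d_U(u)+d_U(v)-1)+2c_{uv}$. Your explicit justification of the $+1$ (via $u\in N_U(v)$) and of the disjointness of the families supplies details the paper leaves implicit.
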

\begin{proof}
    For any $z\in N_U(u)\cup N_U(v)$, $u^*z$ is a triangular edge, where $u^*z\in E_G(\{u^*\},U)$. For any $z'\in N_U(u)$ and  $z''\in N_U(v)$, $z'u$ and $z''v$ are triangular edges, where $z'u,z''v\in E(G[U])$.
    For any $w\in N_G(u)\cap N_G(v)\cap W$, $uw$ and $vw$ are  triangular edges, where $uw,vw\in E_G(U,W)$. 

Therefore, we have \begin{align}
    \tau(G)&\geq | N_U(u)\cup N_U(v)|+(|N_U(u)|+|N_U(v)|-1)+2c_{uv}\nonumber\\ &\geq (\max\{d_U(u),d_U(v)\}+1)+(d_U(u)+d_U(v)-1)+2c_{uv}\label{b6}\\&=R(d_U(u),d_U(v),c_{uv})\nonumber
\end{align}
as desired. 
\end{proof}

\noindent\textbf{\textit{Proof of Theorem \ref{t2}.}}   For convenience,  we write $\rho=\rho(G)$ in the proof. If $G$ is disconnected, then there exists a connected component $G_0$ such that $\rho(G_0)=\rho$, and thus $\rho(G_0)=\rho\geq \sqrt{m}$. Clearly, 
 $G_0$ is not a complete bipartite graph. Otherwise,  $\rho(G_0)=\sqrt{e(G_0)}$. Then we have $\sqrt{m}\leq \rho(G_0)=\sqrt{e(G_0)}\leq \sqrt{m}$, which implies that $e(G_0)=m$. Thus $G$ contains isolated vertices,  a contradiction. Hence we have $\tau(G)\geq \tau (G_0)$ and $G_0$ is not a complete bipartite graph. Accordingly, it is sufficient for us to establish the proof when $G$ is a connected graph.

If $E(G[U])=\emptyset$ or $\mathcal{B}=\emptyset$, then by \eqref{s2},  $G$ is a complete bipartite graph or $\rho< \sqrt{m}$,  which contradicts our assumption.

\begin{comment}
    \noindent\textbf{Claim 1.} If $E(G[U])=\mathcal{B}$, $x_u=x_v=\frac{1}{2}$ for any $uv\in E(G[U])$, and $W=\emptyset$ or $f(w)=d_U(w)(1-x_w)+\frac{1}{2}d_W(w)=0$ for any $w\in W$, then $\tau(G)\geq \rho$.
\begin{proof}
Let $U'=\{v\in U\mid d_U(v)\geq1\}$. Then $|U'|\geq\
2$ since $E(G[U])\neq\emptyset$.

    If $W=\emptyset$, then $V(G)=\{u^*\}\cup U$. For any $v\in  U'$, from the characteristic equation at $v$,   it follows that $\frac{1}{2}\rho =1+\frac{d_U(v)}{2}$. Then $G[U']$ is a $(\rho-2)$-regular graph, and thus $\tau(G)\geq e_G[\{u^*\}, U']+\frac{(\rho-2)|U'|}{2}=\frac{\rho|U'|}{2}\geq \rho$.

    If $W\neq\emptyset$ and  $f(w)=0$ for any $w\in W$, then $d_W(w)=0$ and $d_U(w)(1-x_w)=0$. Since $d_W(w)=0$ for any $w\in W$ and $G$ is connected, we have $d_U(w)>0$, and thus $x_w=1$ for any $w\in W$.  From the characteristic equations at $u^*$ and $w$, it follows that $\rho=\sum\limits_{z\in U}x_z=\sum\limits_{z\in N_U(w)}x_z$. Then $N_G(w)= U$ for any $w\in W$  since $x_z>0$ for any $z\in U$, and thus $G\cong (1+|W|)K_1\vee G[U]$. For any $v\in U'$, we have $\frac{\rho}{2}=1+|W|+\frac{d_{U'}(v)}{2}$. Then $G[U']$ is a $(\rho-2-2|W|)$-regular graph, and thus $\tau(G)=e_G[\{u^*\}\cup W, U']+e(G[U'])=(1+|W|)|U'|+\frac{(\rho-2-2|W|)|U'|}{2}=\frac{\rho |U'|}{2}\geq\rho$.

     This completes the proof.
\end{proof}
\end{comment}

In what follows, assume that $E(G[U])\neq\emptyset$ and $\mathcal{B}\neq\emptyset$. 
Let $U_1$ be a set of all vertices incident to an edge in $\mathcal{B}$, and  $U_2=\{v\in U_1\mid \gamma_v=0\}$. Then $U_2\subseteq U_1$. Now we complete the proof of Theorem \ref{t2} by the following two cases.

\textbf{Case 1.} $U_2=U_1$.

For any $v\in U_1$,  there exists $u\in N_{U_1}(v)$ such that $x_u=\beta_v\leq\frac{1}{2}$ by $U_2=U_1$. Since $uv\in \mathcal{B}$, we have $x_u+x_v\geq1$, then $x_v\geq\frac{1}{2}$. Thus $x_v=\frac{1}{2}$ for any $v\in U_1$. For \eqref{s2} and $\rho\geq \sqrt{m}$, we obtain the following  properties:  $E(G[U])=\mathcal{B}$, $x_u=x_v=\frac{1}{2}$ for any $uv\in E(G[U])$, and  $\sum\limits_{w\in W}f(w)=0$. 

Let $U'=\{v\in U\mid d_U(v)\geq1\}$. Then $|U'|\geq\
2$ since $E(G[U])\neq\emptyset$.

    If $W=\emptyset$, then $V(G)=\{u^*\}\cup U$. For any $v\in  U'$, from the characteristic equation at $v$,   it follows that $\frac{1}{2}\rho =1+\frac{d_U(v)}{2}$. Then $G[U']$ is a $(\rho-2)$-regular graph, and thus $\tau(G)\geq e_G[\{u^*\}, U']+\frac{(\rho-2)|U'|}{2}=\frac{\rho|U'|}{2}\geq \rho$.

    If $W\neq\emptyset$, then $f(w)=0$ for any $w\in W$, and thus $d_W(w)=0$ and $d_U(w)(1-x_w)=0$. Since $d_W(w)=0$ for any $w\in W$ and $G$ is connected, we have $d_U(w)>0$, and thus $x_w=1$ for any $w\in W$.  From the characteristic equations at $u^*$ and $w$, it follows that $\rho=\sum\limits_{z\in U}x_z=\sum\limits_{z\in N_U(w)}x_z$. Then $N_G(w)= U$ for any $w\in W$  since $x_z>0$ for any $z\in U$, and thus $G\cong (1+|W|)K_1\vee G[U]$. For any $v\in U'$, we have $\frac{\rho}{2}=1+|W|+\frac{d_{U'}(v)}{2}$. Then $G[U']$ is a $(\rho-2-2|W|)$-regular graph, and thus $\tau(G)=e_G[\{u^*\}\cup W, U']+e(G[U'])=(1+|W|)|U'|+\frac{(\rho-2-2|W|)|U'|}{2}=\frac{\rho |U'|}{2}\geq\rho$.

Combining the above arguments, we have $\tau(G)\geq\rho$ in this case.

\textbf{Case 2.} $U_2 \subsetneq U_1$.

We now complete the proof by contradiction. Suppose that $\tau(G) < \rho$. Since $U_2 \subsetneq U_1$, we have $U_1\setminus U_2\neq \emptyset$.  For any $v\in U_1\setminus U_2$, there exists $v'\in U_1$   such that $vv'\in \mathcal{B}$, $\frac{1}{2}\leq x_{v'}=\beta_v\leq1$       and $\gamma_v>0$. By Lemma \ref{l6}, we have $\tau(G)\geq R(d_U(v'),d_U(v),c_{vv'})$.

\textbf{Subcase 2.1.} $\tau(G)\geq R(d_U(v'),d_U(v),c_{vv'})+1$  for every $v\in U_1\setminus U_2$.

First, we show $\sum\limits_{w\in W\setminus N_G(v)}x_w> 2d_{\mathcal{B}}(v)\gamma_v$ for every $v\in U_1\setminus U_2$. Since $v\in U_1\setminus U_2$, we have $\gamma_v=2\beta_v-1>0$, and thus $\frac{1}{2}<\beta_v\leq1$. By Lemma \ref{l7}, we have 
   $$\rho \beta_v>\beta_v\tau(G)\geq \beta_vR(d_U(v'),d_U(v),c_{vv'})+\beta_v
    \geq d_U(v')+c_{vv'}+2\gamma_vd_U(v)+1,$$
and thus 
\begin{align}
     \sum\limits_{w\in W\setminus N_G(v)}x_w&\geq  \sum\limits_{w\in N_W(v')\setminus N_G(v)}x_w\nonumber\\
    &=\sum\limits_{w\in N_W(v')}x_w-\sum\limits_{w\in N_W(v')\cap N_W(v) }x_w\nonumber\\
    &=\rho(G)\beta_v-1-\sum\limits_{z\in N_U(v') }x_z-\sum\limits_{w\in N_W(v')\cap N_W(v) }x_w\nonumber\\
    &>2\gamma_vd_U(v)\nonumber\\
    &\geq 2d_{\mathcal{B}}(v)\gamma_v\label{r11}
\end{align} for every $v\in U_1\setminus U_2$.
 
% $\sum\limits_{w\in W\setminus N(v)}x_w\geq \rho\beta_v-1-d_U(v')- c_{vv'}$. Combining \eqref{s8}, we obtain 
 %  $ \sum\limits_{w\in W\setminus N(v)}x_w> 2\gamma_vd_U(v)\geq 2d_{\mathcal{B}}(v)\gamma_v$ for every $v\in U_1\setminus U_2$.

  By Lemma \ref{l2}, \eqref{r11} and  Lemma \ref{l5}, we have \begin{align}
      &\sum\limits_{w\in W }f(w)\geq \frac{1}{2}\sum\limits_{v\in U_1\setminus U_2 }(x_v\sum\limits_{w\in W\setminus N_G(v) }x_w)>\sum\limits_{v\in  U_1\setminus U_2 }d_{\mathcal{B}}(v)x_v\gamma_v\nonumber\\&=\sum\limits_{v\in  U }d_{\mathcal{B}}(v)x_v\gamma_v\geq\sum\limits_{uv\in \mathcal{B}}(x_u+x_v-1).\label{e12}
  \end{align}  Combining \eqref{e12} and \eqref{s2}, we have  $\rho<\sqrt{m}$, contradicting $\rho\geq \sqrt{m}$.

  \textbf{Subcase 2.2.} There exists a vertex $y\in U_1\setminus U_2$ such that  $\tau(G)= R(d_U(y'),d_U(y),c_{yy'})$, where $y'\in U_1$,   $yy'\in \mathcal{B}$, $x_{y'}=\beta_y$      and $\gamma_y>0$.

  Let $V^*$ be the set of vertices incident to at least one triangular edge in $G$, and $H=(V^*,T(G))$. Then we have the following claim.

  \noindent\textbf{Claim 1.}  %$E(H)=\{u^*y,u^*y',yy'\}\cup \{u^*u,yu,y'u\mid u\in N_U(y)\cap N_U(y')\}\cup\{u^*z,y'z\mid z\in N_U(y')\setminus ((N_U(y)\cap N_U(y'))\cup \{y\})\}\cup \{yw,y'w\mid w\in N_W(y')\cap N_W(y)\}$, 
  $G[V^*]=H$ and $N_G(v)\cap V^*$ is an independent set of $H$ for any $v\in V(G)\setminus V^*$.

  \begin{proof}
   Without loss of generality, let $d_U(y')\geq d_U(y)$. Then $\tau(G)= R(d_U(y'),d_U(y),c_{yy'})=2d_U(y')+d_U(y)+2c_{yy'}$.   Since \eqref{b6}, we have \begin{align*}
       \tau(G)&\geq | N_U(y)\cup N_U(y')|+(|N_U(y)|+|N_U(y')|-1)+2c_{yy'}\\&\geq d_U(y')+1+(d_U(y)+d_U(y')-1)+2c_{yy'}\\&=R(d_U(y'),d_U(y),c_{yy'})=\tau(G). 
   \end{align*} Thus, $| N_U(y)\cup N_U(y')|=d_U(y')+1$, which implies $N_U(y)\setminus \{y'\}\subseteq N_U(y')$. Clearly, we have \begin{align*}
       E(H)=&\{u^*y,u^*y',yy'\}\cup\{u^*z,y'z\mid z\in N_U(y')\setminus ((N_U(y)\cap N_U(y'))\cup \{y\})\}\\&\cup \{u^*u,yu,y'u\mid u\in N_U(y)\cap N_U(y')\}\cup \{yw,y'w\mid w\in N_W(y')\cap N_W(y)\}
   \end{align*} and $d_{H}(y')=|V^*|-1$.

      Now we show $G[V^*]=H$. Otherwise, there exists $y_1y_2\in E(G[V^*])\setminus E(H)$. If $y_1=y'$ or $y_2=y'$, then $y_1y_2$ is a triangular edge, a contradiction. If $y_1\neq y'$ and $y_2\neq y'$, then $G[\{y',y_1,y_2\}]$ is a triangle, and thus $y_1y_2$ is a triangular edge, a contradiction.

      Finally, we show that $N_G(v)\cap V^*$ is an independent set of $H$ for any $v\in V(G)\setminus V^*$. Otherwise, there exists $v_1\in V(G)\setminus V^*$ such that $G[N_G(v_1)\cap V^*]$ contains an edge $y_3y_4$. Clearly,  $G[\{v_1,y_3,y_4\}]$ is a triangle, and thus $v_1y_3$ is a triangular edge, a contradiction.

     Combining the above arguments completes the proof of Claim 1.
  \end{proof}

 	In what follows, we denote by $I_n$ the $n \times n$ identity matrix, and by $\textbf{j}_n$ the all-one vector of order $n$. In particular, for a vertex set $V$ of order $n$ and a subset $V' \subseteq V$, let $\textbf{j}_{n,V'}$ be the $n$-dimensional vector whose entries corresponding to the vertices in $V'$ are equal to $1$, and all other entries are $0$. When there is no risk of confusion, we write $\textbf{j}$ for $\textbf{j}_{n}$, $I$  for $I_n$  and $\textbf{j}_{V'}$ for $\textbf{j}_{n,V'}$.
	
	Let  $V_z=N_G(z)\cap V^*$ for any $z\in V(G)\setminus V^*$. Recall that $H=(V^*,T(G))$.   By Claim 1, we have $G[V^*]=H$, and $V_z$ is an independent set of $H$ for every $z\in V(G)\setminus V^*$. Therefore,
	$e(H)=\tau(H)=\tau(G)$. 
	Since $yy'\in \mathcal{B}$, the graph $G$ contains the triangle $G[\{u^{*},y,y'\}]$, and hence $\tau(G)\ge 3$. 	Let   $\overline{V_z}=V(H)\setminus V_z$, and $\textbf{p}_z=(p_{v,z})_{v\in V(H)}$ with 
	
	$$p_{v,z}=
	\begin{cases}
		\rho, & \text{if } v\in V_z,\\[1mm]
		d_{V_z}(v)+\dfrac{1}{2}d_{\overline{V_z}}(v), & \text{if } v\in \overline{V_z}.
	\end{cases}$$
	Furthermore, for convenience, set $M=\rho I-A(H)$.

\medskip
\noindent
\textbf{Claim 2.} 
$\textbf{j}^{T}\textbf{p}_z=\rho|V_z|+e(H)$ and $M\textbf{p}_z\ge (\rho^2-e(H))\textbf{j}_{V_z}$. 
Furthermore, there exists $v^*\in V(H)$ such that $(M\textbf{p}_z)_{v^*}> ((\rho^2-e(H))\textbf{j}_{V_z})_{v^*}$.

\begin{proof}

	Since $V_z$ is an independent set, we have
	$$
	\sum_{v\in \overline{V_z}}p_v
	=\sum_{v\in \overline{V_z}}d_{V_z}(v)
	+\frac{1}{2}\sum_{v\in \overline{V_z}}d_{\overline{V_z}}(v)
	=e_H(V_z,\overline{V_z})+e(H[\overline{V_z}])
	=e(H).$$
	Consequently, $\textbf{j}^{T}\textbf{p}_z=\rho|V_z|+e(H)$.
	
	%We next show that
	%$p_u\le \frac{e(F)}{2}$
	%for every $u\in \overline{V_z}$. For each $i\in N_B(u)$, the edge $ui$ is contained in a triangle $F[\{u,i,w_i\}]$. Since $V_z$ is an independent set, we have $w_i\in \overline{V_z}$. Moreover, the $d_B(u)$ edges $iw_i$, where $i\in N_B(u)$, are pairwise distinct and none of them is incident to $u$. Therefore,
	%$e(F)\ge d_F(u)+d_B(u)=2d_B(u)+d_{\overline{V_z}}(u),$
	%which implies that
	%$p_u=d_B(u)+\frac{1}{2}d_{\overline{V_z}}(u)\le \frac{e(F)}{2}.$

	Let $v\in V(H)$. Now we show $(M\textbf{p}_z)_v\geq ((\rho^2-e(H))\textbf{j}_{V_z})_v$ for any $v\in V(H)$.
	
	 If $v\in V_z$, then $$(A(H)\textbf{p}_z)_v=\sum\limits_{u\in N_H(v)\cap \overline{V_z}}(d_{V_z}(u)+\frac{1}{2}d_{\overline{V_z}}(u))\leq e_H(V_z,\overline{V_z})+e(H[\overline{V_z}])\leq e(H),$$ and thus $(M\textbf{p}_z)_v\geq \rho^2-e(H)=((\rho^2-e(H))\textbf{j}_{V_z})_v$.
	
	If $v\in \overline{V_z}$, then \begin{align}
		(M\textbf{p}_z)_v&=\rho d_{V_z}(v)+\frac{\rho}{2}d_{\overline{V_z}}(v)-(d_{V_z}(v)\rho+\sum\limits_{u\in N_{\overline{V_z}}(v)}(d_{V_z}(u)+\frac{1}{2}d_{\overline{V_z}}(u)))\nonumber\\&=\frac{\rho}{2}d_{\overline{V_z}}(v)-\sum\limits_{u\in N_{\overline{V_z}}(v)}(d_{V_z}(u)+\frac{1}{2}d_{\overline{V_z}}(u)).\label{e21}
	\end{align}  
	In fact, for any $u\in \overline{V_z}$ and $i\in N_{V_z}(u)$, the edge $ui$ is contained in a triangle $H[\{u,i,w_i\}]$, where $w_i\in \overline{V_z}$ since $V_z$ is an independent set. Then $\frac{e(H)}{2}\geq \frac{2d_{V_z}(u)+d_{\overline{V_z}}(u)}{2}=d_{V_z}(u)+\frac{1}{2}d_{\overline{V_z}}(u) $, and thus, by \eqref{e21} and $v\in \overline{V_z}$, we have  $$(M\textbf{p}_z)_v\geq (\frac{\rho}{2}-\frac{e(H)}{2})d_{\overline{V_z}}(v)\geq 0=((\rho^2-e(H))\textbf{j}_{V_z})_v.$$
	
	Therefore, $(M\textbf{p}_z)_v\geq ((\rho^2-e(H))\textbf{j}_{V_z})_v$ for any $v\in V(H)$, which implies that $M\textbf{p}_z\geq (\rho^2-e(H))\textbf{j}_{V_z}$.

	%Let $v\in V(F)$. If $v\in B$, then $N_F(v)\subseteq \overline{B}$, and hence
	%	$\big((rI_{|V(F)|}-A(F))P\big)_v
	%	=r^2-\sum_{u\in N_F(v)}p_u
	%	\ge r^2-\sum_{u\in \overline{B}}p_u
	%	=r^2-e(F)
	%	=\big((r^2-e(F))\mathbf{1}_B\big)_v.$
	%If $v\in \overline{B}$, then
	%	$\big((rI_{|V(F)|}-A(F))P\big)_v
	%	=r\left(d_B(v)+\frac{1}{2}d_{\overline{B}}(v)\right)
	%	-rd_B(v)-\sum_{u\in N_{\overline{B}}(v)}p_u
	%	=\frac{r}{2}d_{\overline{B}}(v)-\sum_{u\in N_{\overline{B}}(v)}p_u
	%	\ge \frac{r-e(F)}{2}d_{\overline{B}}(v)
	%	\ge 0
	%	=\big((r^2-e(F))\mathbf{1}_B\big)_v.$
	%Thus,
	%$(rI_{|V(F)|}-A(F))P\ge (r^2-e(F))\mathbf{1}_B.$
	
	Finally,  since $e(H)=\tau(H)\ge 3$, the graph $H$ contains a triangle. As $V_z$ is an independent set, this triangle contains an edge of $H[\overline{V_z}]$. Hence there exists a vertex $v^{*}\in \overline{V_z}$ such that $d_{\overline{V_z}}(v^{*})>0$. Recall that we are assuming $e(H)=\tau(G)<\rho$. It follows that
	$(M\textbf{p}_z)_{v^{*}}
	\ge \frac{\rho-e(H)}{2}d_{\overline{V_z}}(v^{*})
	>0
	=((\rho^2-e(H))\textbf{j}_{V_z})_{v^{*}}$.
	
	The proof of Claim 2 is completed.
\end{proof}

 %Moreover, every edge of $H$ is contained in a triangle of $H$. Indeed, if $uv\in E(H)=T(G)$, then $uv$ is contained in a triangle of $G$, all three vertices of this triangle lie in $K$, and all three edges belong to $T(G)$. 

For convenience, we  normalize the Perron vector $\textbf{x}=(x_v)_{v\in V(G)}$ so that
$\sum\limits_{v\in V(G)}x_v=1$. 
Recall that we are assuming $\tau(G)<\rho$. Then
$
\rho^2(H)\le 2e(H)=2\tau(G)<\tau^2(G)<\rho^2,
$
and thus $\rho(H)<\rho$. Therefore, by the Neumann series theorem,
$M^{-1}=\frac{1}{\rho}\sum\limits_{k\ge 0}\left(\frac{A(H)}{\rho}\right)^k$
exists and is a non-negative matrix, where $M=\rho I-A(H)$.

Now we show that $V(G)\setminus V^*\ne \emptyset$. Otherwise, by Claim 1, we have $G=G[V^*]=H$, which implies that $\rho=\rho(H)<\rho$, a contradiction.

Let $\textbf{x}^{\mathsf{T}}=(\textbf{x}^{\mathsf{T}}_{V^*},\textbf{x}^{\mathsf{T}}_{V(G)\setminus V^*})$, where $\textbf{x}_{V^*}$ and $\textbf{x}_{V(G)\setminus V^*}$ correspond to $V^*$ and $V(G)\setminus V^*$,  respectively. The characteristic equations at the vertices of $V^*$ give
\begin{equation}
M\textbf{x}_{V^*}=\sum_{z\in V(G)\setminus V^*}x_z\textbf{j}_{V_z}.\label{e11}
\end{equation}

 By Claim 2, we have  $M\textbf{p}_z\ge (\rho^2-\tau(G))\textbf{j}_{V_z}$
and $\textbf{j}^{\mathsf{T}}\textbf{p}_z=\rho|V_z|+\tau(G)$ for every $z\in V(G)\setminus V^*$. 
Furthermore, the first inequality is strict in at least one coordinate. Let 
$\textbf{q}=\sum\limits_{z\in V(G)\setminus V^*}x_z\textbf{p}_z.$
Since $x_z>0$ for every $z\in V(G)\setminus V^*$, it follows from \eqref{e11} that
$$M\textbf{q}\ge (\rho^2-\tau(G))M\textbf{x}_{V^*}
\quad\text{and}\quad
M\textbf{q}\ne (\rho^2-\tau(G))M\textbf{x}_{V^*}.$$
As $M^{-1}$ is a non-negative matrix, we obtain
$$
\textbf{q}-(\rho^2-\tau(G))\textbf{x}_{V^*}
=M^{-1}(M\textbf{q}-(\rho^2-\tau(G))M\textbf{x}_{V^*})\ge \mathbf{0}
\quad\text{and}\quad \textbf{q}-(\rho^2-\tau(G))\textbf{x}_{V^*}\neq \mathbf{0}.$$ Therefore,

\begin{align}
	(\rho^2-\tau(G))\sum\limits_{v\in  V^*}x_v%=(\rho^2-\tau(G))\textbf{j}_{|V^*|}^{\mathsf{T}}\textbf{x}_{V^*}
	<\textbf{j}_{|V^*|}^{\mathsf{T}}\textbf{q}
	=\sum_{z\in V(G)\setminus V^*}x_z\textbf{j}_{|V^*|}^{\mathsf{T}}\textbf{p}_z
	=\sum_{z\in V(G)\setminus V^*}x_z(\rho|V_z|+\tau(G)).\label{e32}
\end{align}

On the other hand, summing the characteristic equations over all vertices of $V^*$, we have
\begin{align}
	\rho\sum_{v\in V^*}x_v
	=\sum_{uu'\in T(G)}(x_u+x_{u'})
	+\sum_{z\in V(G)\setminus V^*}|V_z|x_z.\label{e13}
\end{align}

Combining \eqref{e32}, \eqref{e13} and $\sum\limits_{v\in V(G)}x_v=1$, we obtain

\begin{align}
	\rho\sum_{uu'\in T(G)}(x_u+x_{u'})
	&=\rho^2\sum_{v\in V^*}x_v
	-\rho\sum_{z\in V(G)\setminus V^*}|V_z|x_z\nonumber\\
	&<\tau(G)\sum_{v\in V^*}x_v
	+\tau(G)\sum_{z\in V(G)\setminus V^*}x_z\nonumber\\
	&=\tau(G).\label{e14}
\end{align}

For any $v_1v_2\in E(G)\setminus T(G)$, we have $N_G(v_1)\cap N_G(v_2)=\emptyset$. Therefore,
$\rho(x_{v_1}+x_{v_2})
=\sum\limits_{w\in N_G(v_1)}x_w+\sum\limits_{w\in N_G(v_2)}x_w
\le 1.$
Since $|E(G)\setminus T(G)|=m-\tau(G)$, it follows that
\begin{align}
	\rho\sum_{v_1v_2\in E(G)\setminus T(G)}(x_{v_1}+x_{v_2})
	\le m-\tau(G).\label{e15}
\end{align}

Combining \eqref{e14} and \eqref{e15}, we have
	$\rho^2
	=\rho\sum\limits_{uv\in T(G)}(x_u+x_v)
	+\rho\sum\limits_{v_1v_2\in E(G)\setminus T(G)}(x_{v_1}+x_{v_2})\\
	<m$,
which contradicts the fact that $\rho\ge \sqrt{m}$. 

Combining the above arguments, we have $\tau(G)\ge\rho$ in Case 2.

The proof of Theorem \ref{t2} is completed by Case 1 and Case 2. $\hfill\square$

\vspace{0.5cm}

By Theorem \ref{t2}, we immediately obtain the following corollary.

\begin{corollary}
     Every  Nosal graph $G$ satisfies $\tau(G)\geq\rho(G)$.
\end{corollary}

\begin{remark}
The proof of Theorem \ref{t2} is inspired by the ideas of Zhai, Li and Lou \cite{zm}.  To prove Theorem \ref{t2}, we localize the graph $G$ around a vertex $u^*$ at which the Perron vector attains its maximum component, and express $\rho^2(G)-e(G)$ as the difference between the excess contributed by internal edges in the neighborhood of $u^*$ and the deficit contributed by the remaining vertices. Since $\rho^2(G)\geq e(G)$, the excess must be at least the deficit. We define two significant vertex sets $U_1$ and $U_2$. If $U_2=U_1$,  then a straightforward spectral analysis gives $\tau(G)\geq \rho(G)$. If $U_2\subsetneq U_1$, we argue by contradiction. Moreover, we prove that if  the local triangular edge-counting inequality is not tight, then under the assumption $\tau(G)<\rho(G)$, the deficit strictly exceeds the excess, yielding a contradiction. If the local triangular edge-counting inequality is tight, we analyze the structural properties of $G$. Subsequently, via a vector comparison argument, we control the total Perron weight of all triangular edges. Combining this with inequalities for non-triangular edges, we obtain $\rho^2(G)<e(G)$, again a contradiction.

    In fact, the proofs of  Theorems \ref{t1} and \ref{t2} are both based on the same principle: by assigning weights to all vertices via the Perron vector of the graph $G$, the global requirement $\rho(G)\geq \sqrt{e(G)}$ is transformed into local information about triangles. 
\end{remark}

%We now prove Theorem \ref{t3}.

%\noindent\textbf{\textit{Proof of Theorem \ref{t3}.}}  Let $G_0$ be a connected component with $\rho(G_0)=\rho(G)$. Then $\rho(G_0)=\rho(G)\geq \sqrt{m}$.

%If $G_0$ is a complete bipartite, then $\rho(G_0)=\sqrt{e(G_0)}$. Thus we have $\sqrt{m}\leq \rho(G_0)=\sqrt{e(G_0)}\leq \sqrt{m}$, which implies that $e(G_0)=m$. Since $G$ without isolated vertices, $G=G_0$ is a  complete bipartite.

%If $G_0$ is not a complete bipartite, then $\tau(G)\geq \tau (G_0)\geq \rho (G)\geq \sqrt{m}$ by Theorem \ref{t2}.

%Combining the above arguments, we complete the proof. $\hfill\square$

\section{Further work}\label{sec-con}

In this paper, we show as a direct corollary of our main results that  every $m$-edge Nosal graph $G$  satisfies $bk(G)>\frac{\rho(G)}{3}>\frac{\sqrt{m}}{3}$ and $\tau(G)\geq \rho(G)>\sqrt{m}$.  Example 2.3 of \cite{ly} provides an $m$-edge Nosal graph $G$ with $bk(G)\leq \frac{\sqrt{m}}{3}+1$. Furthermore, let $H$ be a graph obtained from $K_{4t+3,t}$ by adding an edge within the part of size $4t+3$. Then $H$ is a Nosal graph with $e(H)=4t^2+3t+1$  and  $\tau(H)=2t+1=\sqrt{e(H)+t}$ by a direct computation. Therefore, the constants $\frac{1}{3}$ and $1$ are optimal in 
$bk(G)>\frac{\sqrt{m}}{3}$
and 
$\tau(G)>\sqrt{m}$,
respectively.

A natural research direction is to consider the \textit{generalized book} $B_{s,k}=K_s\vee kK_1$ with $s\ge 2$, where $k\geq1$ is called the size of $B_{s,k}$. Li, Liu and Zhang \cite{ly} proved the following result on the generalized books.
\begin{theorem}{\rm(\!\!\cite{ly})}
    Every $m$-edge graph $G$ with $\rho^2(G) > 2m(1-\frac{1}{s})$ contains  a copy of $B_{s,k}$ of size $k=\Omega_s(\sqrt{m})$. Furthermore, there are such graphs with the largest generalized booksize $O_s(\sqrt{m})$.
\end{theorem}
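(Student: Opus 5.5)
The statement is quoted from \cite{ly}, so it could simply be cited. If we want to reprove it, the natural plan is to generalize the weighted double counting of Section~\ref{sec-pft1}. The threshold $\rho^2(G)>2m(1-\frac1s)$ is exactly Nikiforov's bound $\rho^2\le 2m(1-\frac1\omega)$ at $\omega=s$, so such a $G$ contains $K_{s+1}$. For $s=2$ this is Nosal's theorem and Theorem~\ref{t1} applies. In general, for each $s$-clique $S$ let $C_S=\bigcap_{v\in S}N(v)$, so that the generalized booksize is $bk_s(G)=\max_S|C_S|$. We may assume $G$ is connected and fix a positive Perron vector $\mathbf{x}$ (Lemma~\ref{l1}).

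\textbf{Step 1 (identity).} I would first prove an analogue of Lemma~\ref{l91}. Sum the eigen-equations over the vertices of each $s$-clique $S$ and weight by the Perron entries. The aim is an identity in which the total weight of vertices \emph{missing} $S$ is expressed through
\[
2m\Bigl(1-\tfrac1s\Bigr)-\rho^2,
\]
plus a contribution from $(s+1)$-cliques. The natural device is the Motzkin--Straus-type quadratic form $\sum_{uv\in E}x_ux_v$ restricted to cliques; this is how Nikiforov proves $\rho^2\le 2m(1-1/\omega)$. I would make each inequality in that proof an identity with explicit error terms.

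\textbf{Step 2 (two bounds, as in Lemma~\ref{l92} and \eqref{b4}).} Next I would bound the weight $Y$ of vertices adjacent to exactly $s-1$ vertices of a $K_{s+1}$ from both sides.
\begin{itemize}
\item Upper bound: by double counting, $Y\le bk_s(G)$ times the ``missing weight'' from Step 1.
\item Lower bound: the eigen-equations on a $K_{s+1}$ give $Y\ge (\rho-c_s\,bk_s(G))$ times the clique weight. Here vertices seeing $\ge s$ clique vertices are charged to $s$-subcliques, each of which has at most $bk_s(G)$ common neighbours.
\end{itemize}
Comparing the two bounds with the sign information from Step 1 should give $bk_s(G)\ge \rho/c'_s=\Omega_s(\sqrt m)$.

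\textbf{Main obstacle.} Step 1 is where I expect the difficulty. Unlike the case $s=2$, the deficit $2m(1-\frac1s)-\rho^2$ does not decompose cleanly edge by edge. I would first try Nikiforov's inductive proof via walks, $w_k\le\ldots$, and track the $K_{s+1}$ contributions. If that fails, I would fall back on a cruder argument. First, clique supersaturation (Bollob\'as--Nikiforov) gives many copies of $K_{s+1}$, roughly $\Omega_s(\rho^{s+1})$ once the excess over the threshold is used. Second, I would pass to a subgraph of large minimum degree relative to $\rho$ and pigeonhole over $s$-cliques. This only gives $\Omega_s(\sqrt m)$ without a sharp constant, which is all the statement requires.

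\textbf{Upper-bound construction.} For the second assertion I would mimic the sharpness examples for $s=2$: the graph from $K_{4t+3,t}$ of Section~\ref{sec-con}, and Example~2.3 of \cite{ly}. Take a complete $s$-partite graph with parts of size $\Theta(\sqrt m)$, which sits essentially on the threshold. Then add a sparse perturbation inside one part, for instance a few edges or a small regular graph, raising $\rho^2$ just above $2m(1-\frac1s)$. Every $s$-clique then has common neighbourhood $O_s(\sqrt m)$. Verifying $\rho^2>2m(1-\frac1s)$ is a routine quotient-matrix computation on the equitable partition.
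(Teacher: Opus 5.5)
The paper does not prove this statement. It only quotes it from \cite{ly} in Section~\ref{sec-con}, so citing it, as you first suggest, is all the paper does. Your construction for the second assertion is also fine once the $s$ parts are taken \emph{equal}: adding an edge $uv$ inside a part raises $\rho$ by at least $2x_ux_v=2/n$ (Rayleigh quotient with the old unit Perron vector). Hence $\rho^2$ rises by more than $4(1-\frac1s)$, while $2m(1-\frac1s)$ rises by only $2(1-\frac1s)$, and all degrees are $O_s(\sqrt m)$. Your reproof of the first assertion, however, has a genuine gap.

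\textbf{Step~1 is the whole argument, and it is not supplied.} Lemma~\ref{l91} works for $s=2$ for two reasons: $\sum_{uu'\in E(G)}(x_u+x_{u'})=\sum_v d(v)x_v=\rho\sum_v x_v$, and there are exactly $m$ summands. If you sum eigen-equations over $s$-cliques instead, each $x_v$ is weighted by the number of $s$-cliques through $v$, and the total number of $s$-cliques replaces $m$. The eigen-equation controls neither quantity, so nothing involving $2m(1-\frac1s)-\rho^2$ appears. Turning the Motzkin--Straus proof of $\rho^2\le 2m(1-\frac1\omega)$ into an identity does not help either. Its slack lies in Cauchy--Schwarz, $(\sum_{uv\in E(G)}x_ux_v)^2\le m\sum_{uv\in E(G)}x_u^2x_v^2$, and in Motzkin--Straus for $(x_v^2)$, and neither error term is localized on $s$-cliques. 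Without Step~1 the ``missing weight'' in Step~2 is not even defined, so comparing the two bounds yields nothing.

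\textbf{The fallback rests on a false claim.} No supersaturation bound can give $\Omega_s(\rho^{s+1})$ copies of $K_{s+1}$, because the excess $\rho^2-2m(1-\frac1s)$ may be $O(1)$. For instance, $K_{4t+3,t}$ plus one edge is Nosal with only $t=O(\sqrt m)$ triangles. Your own balanced $s$-partite example has only $\Theta_s(m^{(s-1)/2})$ copies of $K_{s+1}$. Spread over as many as $\Theta_s(m^{s/2})$ $s$-cliques, that is $O_s(m^{-1/2})$ per $s$-clique on average, so pigeonholing forces nothing. Passing to a high-minimum-degree subgraph does not change this. What is missing is a mechanism by which the spectral condition forces the possibly very few $(s+1)$-cliques to concentrate on a single $s$-clique. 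The weighted charging in the proof of Theorem~\ref{t1} provides exactly this for $s=2$; your sketch provides nothing comparable for $s\ge 3$.
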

Thus, a natural question is the following.

\begin{problem}
For  fixed  $s\geq2$, what is the optimal constant $c_s$ such that every $m$-edge graph $G$ with $\rho^2(G) > 2m(1-\frac{1}{s})$ contains a copy of $B_{s,k}$ with $k \ge c_s\sqrt{m}$?
\end{problem}

For $r\geq3$, an \textit{r-joint} is a family of $r$-cliques sharing a common edge. Let $js_r(G)$ be the maximum number of $r$-cliques in an $r$-joint of $G$. Li, Liu and Zhang \cite{ly} proved the following result on $js_r(G)$.

\begin{theorem}{\rm(\!\!\cite{ly})}\label{l13}
		Let $r\geq3$ be an integer, and $G$ be an $m$-edge graph with $\rho^2(G)>2m(1-\frac{1}{r-1})$.	Then $js_r(G)=\Omega_r(m^{\frac{r-2}{2}})$.
	\end{theorem}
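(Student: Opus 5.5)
This statement is quoted from \cite{ly}, so a plan suffices. I will induct on $r$, with the case $r=3$ being the booksize statement $bk(G)=\Omega(\sqrt m)$. The base case follows from Theorem \ref{t1} applied to a component attaining $\rho(G)$: when $r=3$ the hypothesis reads $\rho^2>m$, so $G$ is Nosal and $js_3(G)=bk(G)>\rho/3>\sqrt m/3$.

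For the induction step I want to localize, as in Section \ref{sec-pft2}. Take the Perron vector $\mathbf{x}$ normalized by $\max_v x_v=x_{u^*}=1$, and let $U=N(u^*)$. Then $\rho^2=\sum_{u\in U}\rho x_u$ counts walks of length two from $u^*$, and an identity like \eqref{s2} compares $\rho^2$ with the edge counts inside and around $U$. The hypothesis $\rho^2>2m(1-\frac{1}{r-1})$ forces $G[U]$ to carry many edges weighted by $x_u+x_v$. From this I aim to extract a subgraph $G'\subseteq G[U]$ with $m'$ edges such that $\rho(G')^2>2m'(1-\frac{1}{r-2})$ and $m'=\Omega_r(m)$.

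This step is the main obstacle. The natural tool is a weighted Motzkin--Straus/Wilf type argument: the inequality $\rho^2\le 2m(1-1/\omega)$ (Nikiforov) applied to $G[U]$ shows that a density above the $(r-1)$-threshold inside the neighbourhood appears. I would instead work with a dense set of vertices of large weight, e.g. $U'=\{v: x_v\ge c_r\}$, where degrees are $\Omega(\sqrt m)$ since $\rho x_v\le d(v)$.

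Once such a $G'$ is found, the induction gives an $(r-1)$-joint of size $\Omega(m^{(r-3)/2})$ inside $U$. Adding $u^*$ turns every $(r-1)$-clique of that joint into an $r$-clique. This alone does not gain a factor $\sqrt m$, so the cone point must be replaced by a set of $\Omega(\sqrt m)$ heavy vertices sharing the joint. I would get these by a supersaturation/averaging step: count pairs (heavy vertex $w$, $(r-1)$-joint in $N(w)$) using $\sum_w x_w d(w)$-type double counting. I would then pigeonhole a common base edge together with $\Omega(\sqrt m)$ common extensions, yielding $js_r(G)=\Omega_r(m^{(r-2)/2})$.
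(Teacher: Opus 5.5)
The paper does not prove this statement. It is quoted from \cite{ly} and used only as a black box in the $\tau_t$ bound, so your plan has to stand on its own, and as written it has a genuine gap.

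Your base case is fine. For $r=3$ the hypothesis is $\rho^2>m$, and Theorem~\ref{t1} on a component attaining $\rho$ gives $bk(G)\ge\rho/3>\sqrt m/3$.

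The induction step, however, is not a proof. The transfer step carries the whole difficulty and is left open. Identity \eqref{s2} does give $e(G[U])\ge\rho^2-m>\frac{r-3}{r-1}m$, so $\Omega_r(m)$ edges inside $U$ come for free. But nothing in \eqref{s2} turns the global excess over $2m(1-\frac1{r-1})$ into $\rho(G')^2>2m'(1-\frac1{r-2})$ for some $G'\subseteq G[U]$. Nikiforov's inequality $\rho^2\le 2m(1-1/\omega)$ applied to $G[U]$ runs the wrong way: it bounds $\rho(G[U])$ from above by the clique number and cannot supply the spectral lower bound you need.

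More seriously, your amplification step aims at the wrong base edge. Let $G=T_{r-1}(n)+ab$, the balanced complete $(r-1)$-partite graph (with $(r-1)\mid n$) plus one edge $ab$ inside a part. The Rayleigh quotient with the all-ones vector gives $\rho\ge 2m/n$, hence $\rho^2>2m(1-\frac1{r-1})$. The eigen-equations give $x_a=x_b>x_v$ for all other $v$, so say $u^*=a$.

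Then $G[U]$ is complete $(r-1)$-partite with parts $\{b\}$ and $r-2$ parts of size $n'=n/(r-1)$. Every largest $(r-1)$-joint in $G[U]$ has base edge $bc$ with $c\in U\setminus\{b\}$. The only vertex of $G$ adjacent to all of such a joint is $a$. So the number of $K_r$'s through $bc$ is $(n')^{r-3}=\Theta(m^{(r-3)/2})$, and no joint inside $N(u^*)$ can be amplified. The large joint sits on $ab=u^*b$, which is not an edge of $G[U]$. What is needed here is $\Omega(m^{(r-2)/2})$ copies of $K_{r-2}$ in $N_U(b)$. That is a clique-counting statement, which your joint-type induction hypothesis does not supply.

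Your fallback also does not follow: induction inside $N(w)$ for many heavy $w$, then pigeonholing a common base edge. Vertices with $x_w\ge c_r$ satisfy $d(w)\ge\rho x_w\ge c_r\sqrt m$, so there are only $O_r(\sqrt m)$ of them, against $m$ candidate base edges. Counting alone puts $\Omega(\sqrt m)$ of them on one base edge only if a typical $N(w)$ offers $\Omega(m)$ good base edges.

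That fails in the example. No $N(w)$ has more than $O(\sqrt m)$ base edges carrying $\Theta(m^{(r-3)/2})$ cliques, and for $w$ outside the part of $a$ the only one is $ab$. The coincidence at $ab$ there is structural, not a consequence of counting. To make this route work you would need:
\begin{itemize}
\item a real argument forcing the base edges found in different neighbourhoods to coincide;
\item the $(r-1)$-level spectral hypothesis inside $N(w)$ for $\Omega(\sqrt m)$ vertices $w\neq u^*$, where localization around the maximum Perron entry tells you nothing.
\end{itemize}
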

    
Let $t\geq 3$, and let $\tau_t(G)$ denote the number of edges contained in  at least one $K_t$ of $G$. Observe that $\tau_3(G)=\tau(G)$.   We have the following result.

\begin{theorem}
    Let  $t\geq4$ be a fixed positive integer, and $G$ be an $m$-edge graph with $\rho^2(G)>2m(1-\frac{1}{t-1})$. Then $\tau_t(G)=\Omega_t(m)$.
\end{theorem}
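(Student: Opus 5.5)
The plan is to derive the statement from Theorem \ref{l13}, applied with $r=t$, together with a Kruskal--Katona type bound on the number of cliques a graph with a given number of edges can contain. The point is that a large $t$-joint forces many copies of $K_{t-2}$ inside a common neighbourhood. Every edge of those copies lies in a $K_t$, so counting them will give $\Omega_t(m)$ edges.

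\emph{Step 1: apply Theorem \ref{l13}.} Its hypothesis $\rho^2(G)>2m(1-\frac{1}{t-1})$ is exactly ours, so it yields an edge $uv\in E(G)$ and a family $\mathcal{F}$ of $t$-cliques containing $uv$ with $|\mathcal{F}|\ge c_t\, m^{\frac{t-2}{2}}$ for some $c_t>0$ depending only on $t$. For each $K\in\mathcal{F}$, the set $V(K)\setminus\{u,v\}$ spans a copy of $K_{t-2}$ inside $N(u)\cap N(v)$. Distinct members of $\mathcal{F}$ give distinct vertex sets, so we obtain at least $c_t m^{\frac{t-2}{2}}$ distinct copies of $K_{s}$ with $s=t-2\ge 2$.

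\emph{Step 2: localise to an auxiliary graph.} Let $F$ be the graph formed by the union of the edges of all these $K_s$ copies. Each edge $ab\in E(F)$ lies in some $K\in\mathcal{F}$, which is a $K_t$ of $G$, so $E(F)$ is contained in the set of edges counted by $\tau_t(G)$. The graph $F$ contains at least $c_t m^{\frac{t-2}{2}}$ copies of $K_s$.

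\emph{Step 3: Kruskal--Katona.} A graph with $e$ edges contains at most $C_s e^{s/2}$ copies of $K_s$; this is the standard consequence of Kruskal--Katona, and also follows easily by induction on $s$. For $s=2$ it is trivial with $C_2=1$. Applying it to $F$ gives
\[
c_t m^{\frac{s}{2}}\le C_s\, e(F)^{\frac{s}{2}},
\]
hence $e(F)\ge (c_t/C_s)^{2/s} m$. Therefore $\tau_t(G)\ge e(F)=\Omega_t(m)$.

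The main dependency is Theorem \ref{l13}, which carries all the spectral content; the remaining steps are routine counting. The place needing care is the requirement $s=t-2\ge 2$, which is exactly where $t\ge 4$ is used. For $t=3$ the argument only produces $\Omega(\sqrt m)$ common neighbours, i.e.\ a book, matching the $\sqrt m$ behaviour of Theorem \ref{t2}.
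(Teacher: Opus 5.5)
Your proposal is correct and follows the paper's proof. The paper also applies Theorem \ref{l13} with $r=t$ to get an edge $uv$ lying in $\Omega_t(m^{\frac{t-2}{2}})$ copies of $K_t$. It then takes the auxiliary graph on $N(u)\cap N(v)$ formed by the edges lying in some $K_{t-2}$, which is the same graph as your $F$, and uses the Kruskal--Katona bound to conclude that this graph has $\Omega_t(m)$ edges, all of them in copies of $K_t$.
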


\begin{proof}
   By Theorem \ref{l13} and  $\rho^2(G)>2m(1-\frac{1}{t-1})$, we have $js_t(G)=\Omega_t(m^{\frac{t-2}{2}})$. Then there exists an edge $uv\in E(G)$ contained in $\Omega_t(m^{\frac{t-2}{2}})$ copies of $K_t$ of $G$. Let $H$ be a graph with $V(H)=N_G(u)\cap N_G(v)$ and $E(H)=\{e\in E(G[N_G(u)\cap N_G(v)])\mid \text{$e$ is contained in some copy of $K_{t-2}$ of $G[N_G(u)\cap N_G(v)]$}\}$. Then $H$ contains $\Omega_t(m^{\frac{t-2}{2}})$ copies of $K_{t-2}$. On the other hand, by the Kruskal-Katona theorem (see \cite{bb}), $H$ contains at most $O_t(e(H)^{\frac{t-2}{2}})$ copies of $K_{t-2}$. Thus we have $e(H)=\Omega_t(m)$. By the definition of $H$, we get $\tau_t(G)=\Omega_t(m)$.
\end{proof}

Hence, another interesting question is the following.

\begin{problem}
For  fixed  $t\geq4$, what is the optimal constant $c_t$ such that every $m$-edge graph $G$ with $\rho^2(G) > 2m(1-\frac{1}{t-1})$ satisfies $ \tau_t(G)\ge c_tm$?
\end{problem}

%Li, Liu and Feng \cite{LLL2022} posed the following conjecture.

%This improves the results of Wang et al.~\cite{Wang24} and Zhao et al.~\cite{Zhao19}. 

%A direct computation shows that Example 2.3 in \cite{ly} provides a graph $G$ with $bk(G)\leq \frac{1}{3}\rho(G)+1$. Furthermore, let $H$ be a graph obtained from $K_{t,t,t}$ by joining all vertices of one part to all vertices of an independent set of size $6t-1$. Then $\rho(H)>\sqrt{9t^2-t}=\sqrt{e(H)}$.  On the other hand, we have $bk(H)=t<\frac{\rho(H)+1}{3}$. Therefore, in view of the above two examples,  the constant $\frac{1}{3}$ is sharp.

%In the proof of Theorem \ref{t1}, we  use Proposition \ref{p1}. This is a local structural information. We believe that this spectral approach could further improve the constant, provided that a global analysis of $U$ in the proof of Theorem \ref{t1} is carried out. However, it remains very difficult to completely solve Problem \ref{P1}.

	\section*{\bf Funding}
	
 This work is  supported by the National Natural Science Foundation of China (Grant Nos. 12371347, 12271337).

\section*{Acknowledgments}

The authors used an AI tool for exploratory discussions during the preparation of this work. All mathematical claims, proofs, and citations were independently reviewed, corrected, and approved by the authors, who take full responsibility for the final version of the paper.
	
	%\section*{Declarations}
	
%	\noindent\textbf{Conflict of interest}\  The authors declare that they have no conflict of interest.
	
	%\vskip 0.5em
	
	%\noindent\textbf{Data availability} \  No data was used for the research described in the article.

	\end{spacing}
\end{document}